\newtheorem{thm}{Theorem}[section]
\newtheorem{lemma}[thm]{Lemma}       
\newtheorem{prop}[thm]{Proposition} 
\newtheorem{defn}[thm]{Definition}
\newtheorem{ex}[thm]{Example}
\newtheorem{cor}[thm]{Corollary}
\newtheorem{remark}[thm]{Remark}  
\newtheorem{notation}[thm]{Notation}
\newcommand{\E}{{\mathcal{A}}}
\newcommand{\w}{{\mathfrak{w}}}
\newcommand{\R}{{\mathbb{R}}}
\newcommand{\n}{{\mathfrak{n}}}
\newcommand{\V}{{\mathfrak{v}}}
\newcommand{\spn}{{\mbox{span}}}
\newcommand{\ds}{\displaystyle}
\newcommand{\Span}{\operatorname{span}}
\newcommand{\ab}{\mathfrak{a}}
\title[]{Abelian factors in 2-step nilpotent Lie algebras constructed from graphs}
\author[]{Rachelle DeCoste}
\address{Rachelle DeCoste, Wheaton College, Norton, MA}
\email{decoste\_rachelle@wheatoncollege.edu}
\author[]{Lisa DeMeyer}
\address{Lisa DeMeyer, Central Michigan University, Mount Pleasant, MI}
\email{demey1la@cmich.edu}
\author[]{Meera Mainkar}
\address{Meera Mainkar, Central Michigan University, Mount Pleasant, MI}
\email{maink1m@cmich.edu}
\author[]{Allie Ray}
\address{Allie Ray, Birmingham-Southern College, Birmingham, AL}
\email{adray@bsc.edu}
\date{\today}
\begin{document}

\pagestyle{fancy} 

\begin{abstract}
We consider real 2-step metric nilpotent Lie algebras associated to graphs with possibly repeated edge labels as constructed by Ray in 2016.
We determine how the structure of the edge labeling within the graph contributes to the abelian factor in these Lie algebras. Furthermore, we explicitly compute the abelian factor of the 2-step nilpotent Lie algebras associated with some families of graphs such as star graphs, cycles, Schreier graphs, and properly edge-colored graphs. We also study the singularity properties of these  Lie algebras in certain cases.

\noindent {\it MSC 2020:} Primary: 17B30; Secondary:   05C15, 05C99, 22E25.\\
{\it Key Words:} Nilpotent Lie algebra, Edge-labeled graph, Abelian factor
\end{abstract}

\maketitle 

\section{\textbf{Background}}

 A 2-step nilpotent Lie algebra $\n$ is a Lie algebra where each Lie bracket $[X, [Y, Z]]=0$ for all elements  $X, Y, Z$ of $\n$. Investigating a 2-step nilpotent Lie algebra with inner product has become the main tool to study the associated 2-step nilpotent Lie group with left invariant metric.  This research was initiated by Eberlein \cite{E, E2} and Kaplan \cite{K1,K2}.
Recently, there has been a lot of interest in investigating the structure of 2-step nilpotent Lie algebras associated with various type of graphs. Due to the associated combinatorial structure, these 2-step nilpotent Lie algebras have proven to be an interesting source of examples in the study of geometry and dynamics of nilpotent Lie groups, nilmanifolds, infra-nilmanifolds, solvmanifolds, etc. See, for example \cite{AAA, AD, CMS, CDF, DM, DDM, DeMa, F, FJ, GGI, LW2, M, N, O, PS, PT, R}.

In this paper, we study the structure of real 2-step nilpotent Lie algebras constructed from connected directed edge-labeled graphs. A first construction on simple graphs with no repeated edge labels was introduced by S. Dani and the third author in \cite{DM} in order to study Anosov diffeomorphisms of nilmanifolds. In \cite{DDM}, the first three authors further investigated the geometry of these constructed Lie algebras with a specific metric. More specifically, the authors studied the properties of singularity, Heisenberg-like Lie algebras and closed geodesic density properties on the associated nilmanifolds. 

The construction from \cite{DM} was generalized by the fourth author in \cite{R}.  They defined a new 2-step nilpotent Lie algebra construction associated with graphs (simple or non-simple) with possibly repeated edge labels. These Lie algebras can be thought of as a quotient of the 2-step nilpotent Lie algebras associated with simple graphs as in \cite{DM}. They also studied the Lie algebras constructed from Schreier graphs, which are a generalization of Cayley graphs. This generalized construction was also used in \cite{PS} where the authors study Lie algebras associated with uniform graphs.  

One issue that arises with the construction of 2-step nilpotent Lie algebras from graphs with repeated edge labels is the presence of an abelian factor. By definition, the center of any 2-step nilpotent Lie algebra contains its derived algebra. For the Lie algebras constructed from simple graphs with non-repeated edge labels, the center is equal to its derived algebra. However, if we allow repeated edge labels, this is not always the case and the size of the center compared with the size of its derived algebra depends on the structure of edge labels within the associated graph. The vector space complement of the derived algebra in the center is called an  {\em  abelian factor} of the 2-step nilpotent Lie algebra.    

In this paper, we consider certain families of graphs with repeated edge labels and then determine the existence or absence of abelian factors in the associated 2-step nilpotent Lie algebras. Furthermore, we explicitly describe their non-trivial abelian factors in terms of the associated graphs. We also find a basis for the orthogonal complement of the center with respect to the  natural inner product on the 2-step nilpotent Lie algebra arising from certain families of graphs.

This paper is organized as follows. In Section~\ref{Preliminaries}, we introduce and recall the basic terminology and notation in graph theory and Lie algebras. We also give a sufficient condition in terms of a graph for the associated 2-step nilpotent Lie algebra to have a trivial abelian factor.   We then consider the families of star graphs, cycles, and Schreier graphs in Sections~\ref{Sec:stars}-\ref{Sec:Schreier}, respectively. Section~\ref{Sec:coloring} includes some applications related to graph coloring. We end with a discussion of open questions and further research directions, and we would like to thank the referee for the suggestion of adding this conclusion section.

\section{\textbf{Preliminaries}}\label{Preliminaries}

Let $G=(V,E,C)$ denote a connected, directed graph with vertex set $V(G)$, edge set $E(G)$, and edge labeling given by a surjective map $c:E(G)\rightarrow C(G)=\{Z_1, Z_2, \ldots, Z_p\}$. Note that we will suppress $G$ if it is understood in context. If the direction of an edge from a vertex $v$ to a vertex $w$ is relevant, we denote the edge by $(v, w)$. Otherwise we denote it by $vw$. 

\begin{defn}\label{defn:construction} (\cite{R} Remark 3.2-3.3) Given a directed edge-labeled graph $G=(V,E,C)$, we define a metric 2-step nilpotent Lie algebra $\n_G$ in the following way. First, define the real vector space $\n_G=\V\oplus\w$ where $\V$ is the span of the vertex set $V(G)$ and $\w$ is the span of the edge labels $C(G)$. The metric is defined so that $V \cup C$ is an orthonormal basis for $\n_G$. Define the Lie bracket on $\n_G$ by:
$$[v_i,v_j]=\sum_{\ell=1}^{|C|} (\epsilon_{\ell}-\epsilon'_{\ell})Z_{\ell}$$
where
 \begin{equation*} \epsilon_{\ell}= \begin{cases}
        1 & \text{if } c(v_i,v_j)=Z_{\ell} \\
        0 & otherwise,
    \end{cases}
\end{equation*}
and
 \begin{equation*} \epsilon'_{\ell}= \begin{cases}
        1 & \text{if } c(v_j,v_i)=Z_{\ell} \\
        0 & otherwise.
    \end{cases}
\end{equation*}
All other brackets not defined by linearity or skew symmetry are zero. 
\end{defn}

\begin{remark} Note that with this definition, the derived algebra $[\n_G,\n_G]=\w$.  Also note that this Lie algebra will be 2-step nilpotent as $[\n_G,\n_G]\subseteq \spn\{C(G)\}$ and thus $[\n_G,[\n_G,\n_G]]=0$.
\end{remark}

\begin{ex} For the graph in Figure~\ref{Fig:1}, the orthonormal basis for the Lie algebra $\n$ using Definition ~\ref{defn:construction} would be $\{v_1,v_2,v_3,Z_1,Z_2\}$ with brackets $[v_1,v_2]=Z_1$, $[v_1,v_3]=Z_2$, and $[v_2,v_3]=Z_1-Z_2$. All other brackets not defined by linearity or skew symmetry are zero.

\begin{figure}[ht]
\begin{center}
\begin{tikzpicture}[->,>=stealth',shorten >=1pt,auto,node distance=3cm,
  thick,main node/.style={circle,fill,draw,scale=.5,font=\sffamily\Large\bfseries}]
  
\node[main node] (v3) [label=left:{$v_3$}] at (0,0){};
\node[main node] (v1) [label=above:{$v_1$}] at (1,2){};
\node[main node] (v2) [label=right:{$v_2$}] at (2,0) {};
		
\path[every node/.style={font=\sffamily\small}]
(v1) edge node [right] {$Z_1$}  (v2)
(v2) edge node [below] {$Z_1$}  (v3)
(v3) edge  [bend left,"$Z_2$"]	 (v2)
(v1) edge node  [left] {$Z_2$}  (v3);
\end{tikzpicture}
\caption{Directed graph on 3 vertices}
\label{Fig:1}
\end{center}
\end{figure}
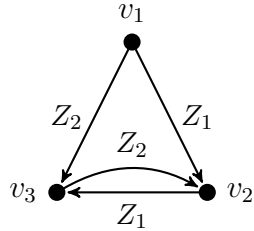
\end{ex}

To study these 2-step nilpotent metric Lie algebras, we follow the approach in \cite{E,K1}. Let $\n$ denote a real 2-step nilpotent Lie algebra and $[\n, \n]$ denote its derived algebra. Let $\V$ denote a linear direct summand of $[\n, \n]$ in $\n$, i.e. $\n = \V\oplus  [\n, \n]$. Choose an inner product $\langle \, , \,\rangle$ on $\n$ such that a basis of $\V$ together with a basis of $[\n, \n]$ forms an orthonormal basis of $\n$. For  $Z \in [\n, \n]$, the skew-symmetric operator $j(Z)$ on $\V$ is defined such that for each $X \in \V$, $j(Z)(X)$ is the unique element satisfying
\begin{eqnarray}\label{jz} \langle j(Z)(X), Y \rangle = \langle Z, [X, Y] \rangle\,\, \text{ for all } Y \in \V. \end{eqnarray}
Note that some authors, such as \cite{E,K1}, have defined this operator based on the center, $\mathcal{Z}(\n)$. In this case, $\n=\mathcal{V}\oplus\mathcal{Z}(\n)$ where $\mathcal{V}=\mathcal{Z}(\n)^{\perp}$. We instead choose to use the derived algebra for ease of calculations since the edge labels form a basis for the derived algebra in our construction.

Let $\n=\V\oplus[\n,\n]$ as above. Since $\n$ is 2-step nilpotent, $[\n, \n]$ is contained in the center $\mathcal{Z}(\n)$. 
Then there exists a unique subspace $\ab$ of $\V$ such that $\mathcal{Z}(\n) = [\n, \n] \oplus \ab$. This subspace $\ab$ is known as a \emph{maximal abelian factor of $\n$} (see \cite{LW1} for example). We note that $\ab$ depends on a choice of the direct summand $\V$. Hence, $\n=\mathcal{Z}(\n)^{\perp}\oplus\ab\oplus[\n,\n]$ and $\V=[\n,\n]^{\perp}=\mathcal{Z}(\n)^{\perp}\oplus\ab$.

\begin{defn}\label{defn:abfactor}
If $\n_G$ is a 2-step nilpotent Lie algebra associated with a directed edge-labeled graph $G = (V, E, C)$ by Definition \ref{defn:construction}, then we have a canonical choice for $\V$, i.e. $\V$ is the span of the vertex set $V$. Therefore the corresponding abelian factor $\ab$ is canonically determined to be the unique subspace of $\V$ such that the center $\mathcal{Z}(\n) = [\n_G, \n_G] \oplus \ab$. We call $\ab$ the \emph{abelian factor of $\n_G$}. If $\ab$ is trivial, we will say $\n_G$ has \emph{no abelian factor}.
\end{defn} 

The abelian factor is a Lie algebraic notion and is not metric dependent. However, as shown in the Lemma below, it is often useful to compute the abelian factor using a specific metric.

\begin{lemma}\label{Lemma:AbFactor}
Let $G= (V, E, C)$ denote a directed, edge-labeled graph. Then the abelian factor $\ab$ of $\n_G$ is equal to ${\ds \bigcap_{Z \in C}^{}\ker (j(Z))}.$
\end{lemma}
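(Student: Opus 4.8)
The plan is to prove the stated equality by the two-sided inclusion, but it is cleanest to first give a single explicit characterization of $\bigcap_{Z\in C}\ker(j(Z))$ and then match it against the defining property of $\ab$. First I would unwind the definition of $j(Z)$ from \eqref{jz}. For $X\in\V$, membership $X\in\bigcap_{Z\in C}\ker(j(Z))$ means $j(Z)(X)=0$ for every $Z\in C$, which by \eqref{jz} says exactly that $\langle Z,[X,Y]\rangle=0$ for all $Y\in\V$ and all $Z\in C$. Since $C$ is an orthonormal basis of $\w=[\n_G,\n_G]$, this condition for all $Z\in C$ is equivalent to the same condition for all $Z\in\w$. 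Because $[X,Y]\in[\n_G,\n_G]=\w$ and the inner product is nondegenerate on $\w$, requiring $\langle Z,[X,Y]\rangle=0$ for every $Z\in\w$ forces $[X,Y]=0$. Hence $X\in\bigcap_{Z\in C}\ker(j(Z))$ if and only if $[X,Y]=0$ for all $Y\in\V$.

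Next I would identify this condition with centrality and then with $\ab$. Since $\n_G$ is $2$-step nilpotent, $\w=[\n_G,\n_G]$ is central, so any $X\in\V$ automatically commutes with $\w$; thus $X\in\mathcal{Z}(\n_G)$ precisely when $[X,Y]=0$ for all $Y\in\V$. Combining this with the previous step yields $\bigcap_{Z\in C}\ker(j(Z))=\mathcal{Z}(\n_G)\cap\V$. It then remains to show $\mathcal{Z}(\n_G)\cap\V=\ab$. The inclusion $\ab\subseteq\mathcal{Z}(\n_G)\cap\V$ is immediate since $\ab\subseteq\V$ and $\ab\subseteq\mathcal{Z}(\n_G)$. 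For the reverse, take $X\in\mathcal{Z}(\n_G)\cap\V$ and use the decomposition $\mathcal{Z}(\n_G)=\w\oplus\ab$ to write $X=W+A$ with $W\in\w$ and $A\in\ab\subseteq\V$; then $W=X-A\in\V\cap\w=\{0\}$, so $X=A\in\ab$. This gives $\mathcal{Z}(\n_G)\cap\V=\ab$ and completes the chain of equalities.

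I do not expect a genuine obstacle here, as the argument is essentially a chase through the definitions of $j(Z)$, the center, and the direct-sum decompositions. The single point that requires care is the passage from $\langle Z,[X,Y]\rangle=0$ for all $Z\in C$ to $[X,Y]=0$: this relies on two features specific to this construction, namely that $C$ is an orthonormal (hence nondegenerate and spanning) basis of the derived algebra, and that $[\n_G,\n_G]=\w$ so that $[X,Y]$ genuinely lies in $\spn(C)$. I would invoke both facts explicitly rather than leaving the nondegeneracy implicit.
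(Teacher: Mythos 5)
Your proof is correct and follows essentially the same route as the paper's: both unwind the definition of $j(Z)$ via the inner product, use the orthonormality of $C$ in $[\n_G,\n_G]$ to pass between $\langle Z,[X,Y]\rangle=0$ for all $Z\in C$ and $[X,Y]=0$, and then identify the resulting condition with membership in $\ab$. The only difference is presentational: you make explicit the intermediate identification $\bigcap_{Z\in C}\ker(j(Z))=\mathcal{Z}(\n_G)\cap\V=\ab$, which the paper's two-inclusion argument leaves implicit in its final step.
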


\begin{proof}
Let $X$ be an element of $\ab$ and let $Z \in C$. Then for all $Y \in \V$, we  have  $[X, Y] = 0$  and hence $\langle Z, [X, Y] \rangle = 0$.  This implies that $\langle j(Z)(X), Y \rangle = 0$ for all $Y \in \V$. Therefore $j(Z)(X) = 0$ and hence $\ab \subseteq {\ds \bigcap_{Z \in C}^{}\ker (j(Z))}.$ 

Now assume that  $X' \in \V$ and $j(Z)(X') = 0 $ for each $Z \in C$. Let $Y \in \V$ be arbitrary. Then $\langle j(Z)(X'), Y \rangle = 0$ for all  $Z\in C$. Hence $\langle Z, [X', Y] \rangle = 0$ for all $Z \in C$. Since $C$ is an orthonormal basis for $[ \n_G, \n_G]$, we have $[X', Y] = 0$.
Therefore $X' \in \ab$ as $Y \in \V$ is arbitrary. We have proved that ${\ds \bigcap_{Z \in C}^{}\ker (j(Z))}\subseteq \ab$.
\end{proof}

When each edge has a unique label, as in the construction in \cite{DM}, the resulting 2-step nilpotent Lie algebra has no abelian factor. This is often not the case if repeated edge labels are allowed. The next results illustrate how common the presence of an abelian factor is in the case when repeated edge labels are allowed and give us some basic tools for which to investigate the presence or lack of an abelian factor in the resulting Lie algebras.

\begin{prop} If $G$ is a directed edge-labeled graph on an odd number of vertices with one edge label, then $\n_G$ has a nontrivial abelian factor. \label{oddvert}
\end{prop}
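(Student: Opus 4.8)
The plan is to reduce the statement to a single linear-algebra fact about skew-symmetric operators on odd-dimensional spaces. Since $G$ has exactly one edge label, the set $C$ consists of a single element, say $C=\{\zone\}$. By Lemma~\ref{Lemma:AbFactor}, the abelian factor is then
$\ab=\bigcap_{Z\in C}\ker(j(Z))=\ker(\jzone)$.
Thus it suffices to show that the operator $\jzone$ on $\V$ has nontrivial kernel.

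Next I would record the relevant dimension and symmetry data. By construction $\V$ is the span of the vertex set $V(G)$, so $\dim\V=|V(G)|$, which is odd by hypothesis. Moreover, the operator $\jzone$ is skew-symmetric with respect to the inner product $\metric$: this is immediate from its defining property \eqref{jz}, since for $X,Y\in\V$ we have $\langle\jzone(X),Y\rangle=\langle \zone,[X,Y]\rangle=-\langle \zone,[Y,X]\rangle=-\langle\jzone(Y),X\rangle$.

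The key step is the standard observation that any skew-symmetric operator $A$ on a real vector space of odd dimension $n$ is singular: choosing an orthonormal basis, its matrix satisfies $A^{T}=-A$, so $\det A=\det A^{T}=\det(-A)=(-1)^{n}\det A=-\det A$, forcing $\det A=0$. Applying this to $A=\jzone$ with $n=|V(G)|$ odd shows $\jzone$ is not injective, hence $\ker(\jzone)\neq\{0\}$. Combining this with the identification $\ab=\ker(\jzone)$ from the first step yields that $\ab$ is nontrivial, as desired.

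There is essentially no obstacle here beyond correctly invoking Lemma~\ref{Lemma:AbFactor} to collapse the intersection over $C$ to a single kernel; once that is done, the heart of the argument is the elementary parity fact that a skew-symmetric operator in odd dimensions cannot be invertible, and the remaining verifications (the dimension of $\V$ and the skew-symmetry of $\jzone$) are routine consequences of Definition~\ref{defn:construction} and \eqref{jz}.
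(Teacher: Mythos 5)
Your proof is correct and follows essentially the same route as the paper: both reduce to the parity fact that a skew-symmetric operator on an odd-dimensional real vector space has nontrivial kernel, and both conclude via Lemma~\ref{Lemma:AbFactor}. The only cosmetic difference is that you collapse the intersection directly since $C$ is a singleton, whereas the paper remarks that linearity of $j$ extends the kernel element to all of the one-dimensional derived algebra; both are valid readings of the lemma.
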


\begin{proof}
 Let every edge in $G$ be labeled by $Z$ and let $|V(G)|=n$, where $n$ is odd. Since $\V$ is odd dimensional and $j(Z): \V \rightarrow \V$ is a skew symmetric linear map, there exists some nonzero $X\in \V$ with $X\in \ker (j(Z))$.  Since $[\n,\n]$ is 1 dimensional,  $[\n,\n] = \{aZ| a\in \R\}$, so by linearity of $j$, $X$ is in the kernel of $j(Z')$ for any $Z'\in [\n,\n]$.  Hence, $\n_G$ has a nontrivial abelian factor by Lemma~\ref{Lemma:AbFactor}.
\end{proof}

To determine the abelian factor from graph properties, we must look at how edges with the same label are connected to each other. For the rest of this section, we assume that our graph $G$ is simple.
  
The {\em neighborhood of a vertex} $v$, denoted by $N(v)$,  is defined as the set of all vertices in graph $G$ which are adjacent to $v$ without considering edge direction.
\begin{defn}\label{defn:znbhd} For a vertex $v$ and an edge label $Z$, we define a \emph{$Z$-neighborhood} of $v$ as follows: $N_Z(v) = \{ w \in N(x) \, | \, c(vw) = Z  \}.$ \end{defn}
\begin{defn}\label{defn:zdeg} Let $v$ be a vertex in graph $G$. Define the \emph{edge-label degree} of $v$ by $deg_Z (v) =|N_Z(v)|$, ie. the number of edges incident to vertex $v$ with label $Z$. \end{defn}

\begin{defn}\label{defn:scripte} For a graph $G$, we define the following set: $$\E(G)=\{ v \in V(G) \, | \,   \mbox{ every } y\in N(v) \mbox{ has } deg_{c(vy)}(y)>1\}.$$ Note that we will suppress $G$ if it is understood in context.\end{defn}

\begin{ex}\label{exC5} Given the graphs in Figure~\ref{Fig:Ex1}, $\E(G_1)=\{v_1,v_2,v_3,v_4\}$ and $\E(G_2)=\{v_1,v_5\}$.

\begin{figure}[ht]
\begin{minipage}{.45\textwidth}
\begin{center}
\begin{tikzpicture}[->,>=stealth',shorten >=1pt,auto,node distance=3cm,
  thick,main node/.style={circle,fill,draw,scale=.5,font=\sffamily\Large\bfseries}]
  
 \node[main node] (v1) [label=above:{$v_1$}] at (0,2){};
  \node[main node] (v2) [label=above:{$v_2$}] at (2,2){};
  \node[main node] (v3) [label=below:{$v_3$}] at (2,0) {};
	\node[main node] (v4)  [label=below:{$v_4$}] at (0,0) {};
		
  \path[every node/.style={font=\sffamily\small}]
    (v1) edge node [above] {$Z_1$}  (v2)
		(v2) edge node [right] {$Z_1$}  (v3)
		  (v3)   edge node  [below] {$Z_1$}  (v4)
			(v4) edge node [left] {$Z_1$}  (v1);

\end{tikzpicture}
\end{center}
\end{minipage}
\begin{minipage}{.45\textwidth}
\begin{center}
\begin{tikzpicture}[->,>=stealth',shorten >=1pt,auto,node distance=3cm,
  thick,main node/.style={circle,fill,scale=.5,draw,font=\sffamily\Large\bfseries}]

  \node[main node] (v3) [label=below:{$v_3$}] at (2,2){};
	\node[main node] (v1)  [label=left:{$v_{1}$}] at(0,3) {};
		\node[main node] (v2)  [label=left:{$v_{2}$}] at (0,1) {};
		\node[main node] (v4)   [label=below:{$v_4$}] at (4,2) {};
		\node[main node] (v5)  [label=right:{$v_{5}$}] at (6,3) {};
		\node[main node] (v6)  [label=right:{$v_{6}$}] at (6,1) {};
		
  \path[every node/.style={font=\sffamily\small}]
    (v1) edge  node [above] {$Z_1$}  (v3)
		  (v2)   edge  node  [below] {$Z_2$}  (v3)
			(v3) edge   node [above] {$Z_1$}  (v4)
			(v4) edge  node [above] {$Z_1$} (v5)
			(v4) edge  node [below] {$Z_2$} (v6);
\end{tikzpicture}
\end{center}
\end{minipage}
\caption{Graphs $G_1$ and $G_2$, respectively}
\label{Fig:Ex1}
\end{figure}
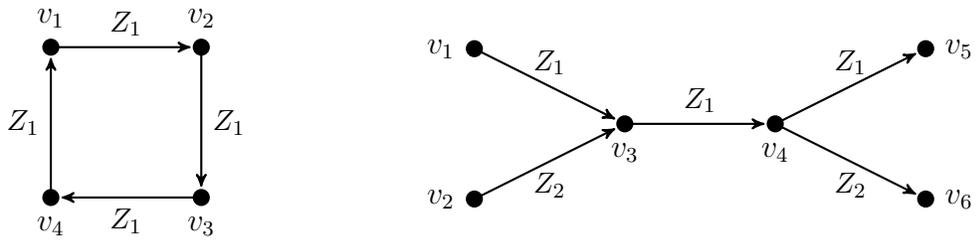
\end{ex}

\begin{thm}\label{Prop:EDRSpan} Let $\n_G$ denote the 2-step nilpotent Lie algebra associated to a simple graph $G$. Then, the abelian factor  $\ab$ of $\n_G$ is contained in the span of $\E$. \end{thm}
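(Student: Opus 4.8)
The plan is to invoke Lemma~\ref{Lemma:AbFactor}, which identifies $\ab$ with ${\ds\bigcap_{Z\in C}\ker(j(Z))}$, and then to show that no element of this intersection can carry a nonzero coefficient on a vertex lying outside $\E$. First I would unwind the definition of $j(Z)$ on the vertex basis. Since $G$ is simple, for an edge label $Z$ and vertices $v_k, w$ the quantity $\langle Z, [v_k, w] \rangle$ equals $+1$ if there is an edge $v_k \to w$ labeled $Z$, equals $-1$ if there is an edge $w \to v_k$ labeled $Z$, and is $0$ otherwise. Consequently, for $X = \sum_i a_i v_i \in \V$ and any vertex $w$, the defining relation~\eqref{jz} gives the $w$-coefficient of $j(Z)(X)$ as
$$\langle j(Z)(X), w \rangle = \langle Z, [X, w] \rangle = \sum_k a_k \langle Z, [v_k, w] \rangle,$$
a signed sum running only over the endpoints of $Z$-labeled edges incident to $w$.

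Next, fix $X = \sum_i a_i v_i \in \ab$ and a vertex $v_i \notin \E$; the goal is to prove $a_i = 0$. By Definition~\ref{defn:scripte}, $v_i \notin \E$ means there is a neighbor $y \in N(v_i)$ with $deg_{c(v_i y)}(y)$ not exceeding $1$. Since $y$ is adjacent to $v_i$ via an edge labeled $Z := c(v_i y)$, necessarily $deg_Z(y) = 1$, so the edge $v_i y$ is the \emph{unique} $Z$-labeled edge incident to $y$. The crucial move is to test the identity $j(Z)(X) = 0$ against this special neighbor $y$, rather than against $v_i$ itself. Taking the $y$-coefficient of $j(Z)(X)$ in the formula above, every term $\langle Z, [v_k, y] \rangle$ with $k \neq i$ vanishes, because $y$ has no $Z$-labeled edge other than $v_i y$; only the $k = i$ term survives, yielding $0 = \langle j(Z)(X), y \rangle = a_i \langle Z, [v_i, y] \rangle = \pm a_i$. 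Hence $a_i = 0$.

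Since $v_i \notin \E$ was arbitrary, every coordinate of $X$ supported off $\E$ vanishes, so $X \in \spn(\E)$; and as $X \in \ab$ was arbitrary, this yields $\ab \subseteq \spn(\E)$.

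I expect the main subtlety, rather than a genuine obstacle, to be the sign bookkeeping in the bracket $[v_k, w]$ coming from edge directions, and above all the recognition that the correct test vector is the degree-one neighbor $y$ — the place where the skew-symmetric operator $j(Z)$ has a single isolated entry — and not the vertex $v_i$ one is trying to eliminate. The hypothesis $deg_Z(y) = 1$ is precisely what collapses the signed sum to one term, and everything else is a direct unwinding of the definitions.
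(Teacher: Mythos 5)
Your proposal is correct and is essentially the paper's own argument: both hinge on testing the condition ``$X$ is in the abelian factor'' against the degree-one neighbor $y$ of a vertex $v_i\notin\E$, where the uniqueness of the $Z$-labeled edge at $y$ isolates the coefficient $a_i$ and forces it to vanish. The only cosmetic difference is that you extract $a_i$ by pairing $[X,y]=0$ with the single label $Z=c(v_iy)$ via Lemma~\ref{Lemma:AbFactor} and the $j(Z)$ operator, whereas the paper reads off $a_i=0$ from the linear independence of $[v_i,y]$ from the other brackets $[v_\ell,y]$ --- the same computation in different clothing.
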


\begin{proof} 
Label the vertices in $V(G)$ as $\E=\{v_1,\dots,v_m\}$ and $\E^c=V(G)-\E=\{ v_{m+1}, \ldots, v_n \}$.

For $k=m+1, \ldots, n$, there exists a vertex $y_k$ in $N(v_k)$ such that $deg_{c(v_ky_k)}(y_k) \leq 1$ since $v_k \in \E^c$.

Let $\ds X = \sum_{i=1}^m a_i v_i + \sum_{j=m+1}^n a_j v_j $ be an element of the abelian factor. Hence, we have the bracket $[X, y_k]= 0$ for all $y_k$ obtained in the previous step.

This implies that for each $k=m+1, \ldots, n$:
\begin{align*}
 a_k[v_k, y_k] + \sum_{1\leq \ell\neq k\leq n} a_k[v_{\ell}, y_k] &= 0.
\end{align*}

Recall that $y_k\in \E^c$ means that the label $c(v_ky_k)\neq c(v_{\ell}y_k), \forall \ell \neq k$. Hence $[v_k, y_k] \not\in \Span\{ [v_{\ell}, y_k]\, |\, \ell\neq k\}$. Thus, $[v_k, y_k]$ is linearly independent from $\{[v_{\ell}, y_k]\, |\, \ell\neq k\}$. Hence $a_k = 0$ for all $k=m+1, \ldots, n$.

Thus, $X \in \Span\{ v_{1}, \ldots, v_m\} = \Span \E$.
\end{proof}

The next examples illustrate the necessity and insufficiency of Theorem~\ref{Prop:EDRSpan}.

\begin{ex}\label{Ex:graphEDR} For $G_1$ in Figure~\ref{Fig:Ex1}, a straightforward calculation shows that the associated 2-step nilpotent Lie algebra $\n_{G_1}$ will have a two dimensional abelian factor with basis $\{v_1+v_3,v_2+v_4\}$.
For $G_2$, the associated Lie algebra has no abelian factor even though $\E=\{v_1,v_5\}$. If it did, it would necessarily be contained in the span of $\{v_1, v_5\}$. A straightforward check shows that $\n_{G_2}$ has no abelian factor. This is also verified in Proposition~\ref{Prop:doublestar}.  We note that for this graph, the orientation of the labels $Z_1$ and $Z_2$ does not affect the abelian factor of $\n_{G_2}$. \end{ex}

\begin{ex}\label{Ex:Ex3} Let ${G_1'}$ be the  graph in Figure~\ref{Fig:example2}. Note this is the same graph as $G_1$ in Figure~\ref{Fig:Ex1} except the direction of $v_4v_1$ has switched. We get the same set $\E=\{v_1,v_2,v_3,v_4\}$ as in Example~\ref{exC5} because the definition of $\E$ does not depend on the direction of the edges. However, $\n_{G_1'}$ has no abelian factor. This is verified in part \ref{CorPart:dimA0} of Proposition~\ref{Cor:Cnnonstandardeven}. 

\begin{figure}[ht]
\begin{center}
\begin{tikzpicture}[->,>=stealth',shorten >=1pt,auto,node distance=3cm,
  thick,main node/.style={circle,fill,draw,scale=.5,font=\sffamily\Large\bfseries}]
  
\node[main node] (v1) [label=above:{$v_1$}] at (0,2){};
\node[main node] (v2) [label=above:{$v_2$}] at (2,2){};
\node[main node] (v3) [label=below:{$v_3$}] at (2,0) {};
\node[main node] (v4)  [label=below:{$v_4$}] at (0,0) {};
		
\path[every node/.style={font=\sffamily\small}]
(v1) edge node [above] {$Z_1$}  (v2)
(v2) edge node [right] {$Z_1$}  (v3)
(v3)   edge node  [below] {$Z_1$}  (v4)
(v1) edge node [left] {$Z_1$}  (v4);

\end{tikzpicture}
\caption{Graph $G_{1}'$}
\label{Fig:example2}
\end{center}
\end{figure}
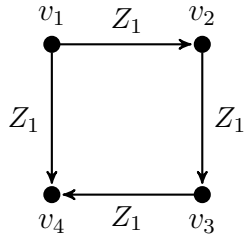
\end{ex}

Sometimes it is advantageous to instead consider what vertices do not contribute to the abelian factor. The next Proposition gives us criteria for that.

\begin{prop}\label{prop:EEdr} Let $\n_G$ denote the 2-step nilpotent Lie algebra associated to a simple graph $G$. The following are equivalent:
\begin{enumerate}
\item\label{Prop1part} The abelian factor of $\n_G$ is contained in the span of $\E$. 
\item\label{Lemma1part} If $\ds \sum_{i=1}^{|V|} a_i v_i$ is in the abelian factor and  $v_j \in N(v_k)$ such that $deg_{c(v_j v_k)}(v_k)=1$, then $a_j=0$.
\end{enumerate}
\end{prop}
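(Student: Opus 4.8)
The plan is to show that the two conditions are merely different encodings of the same constraint on the coefficients of an element of $\ab$, so the proposition follows by unwinding Definition~\ref{defn:scripte} rather than by any further computation with $\n_G$. The link between them is the following reformulation of membership in $\E^c$: for a simple graph, a vertex $v_j$ lies in $\E^c$ if and only if it admits a \emph{witness}, that is, a vertex $v_k \in N(v_j)$ with $deg_{c(v_j v_k)}(v_k)=1$.

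First I would prove this reformulation. By Definition~\ref{defn:scripte}, $v_j \in \E$ exactly when every $y \in N(v_j)$ satisfies $deg_{c(v_j y)}(y) > 1$, so $v_j \in \E^c$ exactly when some $y \in N(v_j)$ has $deg_{c(v_j y)}(y) \leq 1$. The one point that needs care is that this inequality is automatically an equality: since $y \in N(v_j)$, the edge $v_j y$ witnesses $v_j \in N_{c(v_j y)}(y)$, whence $deg_{c(v_j y)}(y) \geq 1$. Taking $v_k = y$ and using that $N(\cdot)$ ignores direction (so $y \in N(v_j)$ gives $v_j \in N(v_k)$) yields the witness; the converse direction is immediate.

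Granting the reformulation, the equivalence is a matter of matching quantifiers. For \eqref{Prop1part}$\Rightarrow$\eqref{Lemma1part}: take $X = \sum_i a_i v_i \in \ab$ together with a pair satisfying $v_j \in N(v_k)$ and $deg_{c(v_j v_k)}(v_k)=1$; the reformulation puts $v_j \in \E^c$, and since $X \in \Span \E$ we conclude $a_j = 0$. For \eqref{Lemma1part}$\Rightarrow$\eqref{Prop1part}: take $X = \sum_i a_i v_i \in \ab$ and any $v_j \in \E^c$; the reformulation produces a witness $v_k$, so \eqref{Lemma1part} gives $a_j = 0$. As this holds for every $v_j \notin \E$, we obtain $X \in \Span \E$, hence $\ab \subseteq \Span \E$.

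I do not expect a genuine obstacle here: the content is purely logical once the reformulation of $\E^c$ is in place, the key subtlety being the implication $deg_{c(v_j y)}(y) \leq 1 \Rightarrow deg_{c(v_j y)}(y) = 1$, which relies on the simplicity of $G$ and on $v_j$ being counted in the relevant $Z$-neighborhood. The remaining ingredient is the standard equivalence of ``$(\exists k\, P(j,k)) \Rightarrow a_j=0$'' with ``$\forall k\,(P(j,k)\Rightarrow a_j=0)$'', which is valid precisely because the conclusion $a_j=0$ does not depend on $k$.
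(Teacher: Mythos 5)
Your proof is correct and follows essentially the same route as the paper's: both directions reduce to unwinding Definition~\ref{defn:scripte}, with (\ref{Prop1part})$\Rightarrow$(\ref{Lemma1part}) reading off $v_j \notin \E$ from the hypothesis and (\ref{Lemma1part})$\Rightarrow$(\ref{Prop1part}) extracting a witness $v_k$ for any $v_j \notin \E$. Your explicit justification that $deg_{c(v_j y)}(y) \leq 1$ forces equality (because $v_j$ itself lies in $N_{c(v_j y)}(y)$) is a point the paper passes over silently, and is a worthwhile addition.
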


\begin{proof}
\noindent  (\ref{Prop1part}) $\Rightarrow$ (\ref{Lemma1part}): If $v_j \in N(v_k)$ such that $deg_{c(v_jv_k)}(v_k)=1$, then by Definition~\ref{defn:scripte}, $v_j \not\in \E$ and therefore $a_j=0$.

\noindent (\ref{Lemma1part}) $\Rightarrow$ (\ref{Prop1part}): Let $\sum a_iv_i$ be in the abelian factor. We need to show if some $a_j \neq 0$ then $v_j\in \E$. Assume $a_j \neq 0$ and $v_j\not\in \E$. By Definition~\ref{defn:scripte}, there is some $v_k \in N(v_j)$ with $deg_{c(v_kv_j)} (v_k) = 1$. Hence assumption (\ref{Lemma1part}) holds and  $a_j=0$, which is a contradiction. Hence, $v_j \in\E$.
\end{proof}

 \begin{cor}\label{cor:nullscriptE} Let $\n_G$ denote the 2-step nilpotent Lie algebra associated to a simple graph $G$. If $\E=\emptyset$, then $\n_G$ has  no abelian factor.  Also, if $\E$ only contains one vertex, then $\n_G$ has no abelian factor.\end{cor}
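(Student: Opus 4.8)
The plan is to derive both statements of Corollary~\ref{cor:nullscriptE} directly from Theorem~\ref{Prop:EDRSpan}, which already tells us that the abelian factor $\ab$ is contained in $\Span \E$. The first claim is immediate: if $\E = \emptyset$, then $\Span \E = \{0\}$, so $\ab \subseteq \{0\}$ forces $\ab = \{0\}$, i.e. $\n_G$ has no abelian factor. So the only real content lies in the second claim, where $\E = \{v_0\}$ consists of a single vertex; here $\Span \E$ is one-dimensional, so Theorem~\ref{Prop:EDRSpan} only narrows $\ab$ down to a line, and I must rule out the possibility that this line actually lies in the center.

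For the single-vertex case, I would argue by contradiction. Suppose $\ab$ is nontrivial; then by Theorem~\ref{Prop:EDRSpan} we have $\ab \subseteq \Span\{v_0\}$, and since $\ab$ must be one-dimensional we may take $X = v_0 \in \ab$ (rescaling the nonzero element). Being in the abelian factor, $X$ lies in the center, so $[v_0, Y] = 0$ for every $Y \in \V$. The key observation is that $v_0$, like any vertex in a connected graph on at least two vertices, has at least one neighbor $w \in N(v_0)$; then the bracket $[v_0, w]$ picks out the edge label $c(v_0 w)$ as a nonzero element of $\w = [\n_G, \n_G]$. This directly contradicts $[v_0, w] = 0$. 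Hence $\ab = \{0\}$.

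The one subtlety I would address carefully is the degenerate graph with a single vertex and no edges, where $v_0$ has no neighbors and the bracket argument above does not apply. In that situation $\w = 0$, the whole Lie algebra is abelian, and one should check whether the statement is meant to cover this case or is implicitly excluded by the standing assumption that $G$ is connected with a nontrivial edge labeling (recall that $c$ is surjective onto $C$, so $C \neq \emptyset$ requires at least one edge, and connectedness with an edge forces $|V| \geq 2$). Under the paper's conventions, $C(G)$ is nonempty and $G$ is connected, so every vertex has a neighbor and the contradiction goes through; I would state this reliance on connectedness explicitly.

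The step I expect to be the main obstacle is not a computation but a matter of bookkeeping: making sure the reduction to ``$v_0$ has a neighbor whose edge gives a nonzero bracket'' is airtight under the construction of Definition~\ref{defn:construction}. One must confirm that for a simple graph an edge $v_0 w$ contributes a genuinely nonzero element $\pm Z_\ell$ to $[v_0, w]$ (there is no cancellation from a reverse edge in a simple graph, since $(v_0,w)$ and $(w,v_0)$ cannot both be present), so that $[v_0, w] \neq 0$ and the membership of $v_0$ in the center is genuinely violated. This is where the simplicity hypothesis and the precise form of the bracket formula do the work, and I would flag it as the point requiring the most care.
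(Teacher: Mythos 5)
Your proof is correct and follows essentially the same route as the paper: both deduce $\ab\subseteq\Span\E$ from Theorem~\ref{Prop:EDRSpan} (the paper routes this through Proposition~\ref{prop:EEdr}, which is equivalent), and in the singleton case both kill the remaining coefficient by bracketing $v_0$ against a neighbor and using that $[v_0,w]=\pm Z_\ell\neq 0$ in a simple connected graph. Your extra remarks on the degenerate one-vertex graph and on simplicity preventing cancellation are careful elaborations of points the paper leaves implicit, not a different argument.
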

\begin{proof} If $\E=\emptyset$, the proof follows directly from Proposition~\ref{prop:EEdr}. 
Letting $\E=\{v_i\}$, by Proposition~\ref{prop:EEdr}, the abelian factor will be of the form $a_iv_i$. Then for some $v_k \in N(v_i)$, $[a_iv_i,v_k]=0$, which implies that $a_iZ=0$. Thus, $a_i=0$, and $\n_G$ has no abelian factor.
\end{proof}

Note that when $\E$ contains more than one vertex, there may or may not be an abelian factor. See Example~\ref{Ex:graphEDR}.

\section{\textbf{Star graphs}}\label{Sec:stars}
In the following section, we determine what properties of star graphs lead to an abelian factor in the associated Lie algebra and compute a nice basis for the orthogonal complement of the center.

Let $K_{1,n}$ denote a star graph. The vertex of degree $n$ is called often called the \emph{center vertex} of a star graph in graph theory work. To avoid confusion with the Lie Algebra term, we will call the vertex of degree $n$ in $K_{1,n}$ the \emph{central vertex} and label this vertex $v_0$.  A vertex of degree 1 is called an \emph{end} or a \emph{leaf}.

\begin{notation}\label{notation:star}
For a star graph, $K_{1,n}$, label the central vertex as $v_0$. Let $Z_1, \ldots, Z_k$ denote the edge labels in the graph, and let $m_i$ denote the multiplicity of edge label $Z_i$ for $1\leq i \leq k$.  Label the end vertices in graph $K_{1,n}$ as $v_{i,j}$ so that edges with label $Z_i$ connect the central vertex $v_0$ to vertices $v_{i,j}$ for $1\leq j \leq m_i$ for each $1 \leq i \leq k$.
Denote the direction of each edge via a map  $\delta: \{v_{i,j}\} \rightarrow \{+1, -1\}$ where $\delta(v_{i,j}) = +1$ if the edge $Z_i$ is oriented from $v_0$ to $v_{i,j}$ and $-1$ if edge $Z_i$ is oriented from $v_{i,j}$ to  $v_0$. Hence in the Lie algebra $\n_{K_{1,n}}$, we have $[v_0, v_{i,j}] = \delta(v_{i,j}) Z_i$ for all $1\leq j \leq m_i$ and $1 \leq i \leq k$.\end{notation}

\begin{thm}\label{Thm:DimEdRStar} Let $K_{1,n}$ be a star graph with $k$ distinct edge labels. Then the associated Lie algebra $\n_{K_{1,n}}$ has abelian factor of dimension $\ds n-k$.
\end{thm}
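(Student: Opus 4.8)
The plan is to compute the abelian factor directly as $\ab = \{X \in \V : [X, Y] = 0 \text{ for all } Y \in \V\}$, which is exactly the description Lemma~\ref{Lemma:AbFactor} gives. Writing a general element of $\V$ in the coordinates from Notation~\ref{notation:star} as $X = a_0 v_0 + \sum_{i=1}^k \sum_{j=1}^{m_i} a_{i,j} v_{i,j}$, I would exploit the defining structural feature of a star graph: no two leaves are adjacent, so the only nonzero brackets are $[v_0, v_{i,j}] = \delta(v_{i,j}) Z_i$. This is what makes the vanishing conditions split cleanly by edge label.

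First I would bracket $X$ against an arbitrary leaf $v_{p,q}$. Since leaves do not bracket with one another, $[X, v_{p,q}] = a_0\,\delta(v_{p,q}) Z_p$, and requiring this to vanish for every leaf forces $a_0 = 0$ (using $n \ge 1$, so a leaf exists). Next I would bracket $X$ against the central vertex, obtaining $[X, v_0] = -\sum_{i=1}^k \left( \sum_{j=1}^{m_i} a_{i,j}\,\delta(v_{i,j}) \right) Z_i$. Because $Z_1, \ldots, Z_k$ form a basis for $[\n_G, \n_G]$ and are in particular linearly independent, vanishing of this bracket is equivalent to the $k$ scalar equations $\sum_{j=1}^{m_i} a_{i,j}\,\delta(v_{i,j}) = 0$, one for each label $i$. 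Together with $a_0 = 0$, these conditions are also sufficient, since every bracket of $X$ against a basis vector of $\V$ is covered by the two computations above.

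This reduces the result to a dimension count. The $n+1$ coordinates $(a_0, (a_{i,j}))$ are cut out by $1 + k$ linear conditions: the equation $a_0 = 0$ and one relation inside each of the $k$ blocks of leaf-coordinates. The block relation for label $i$ involves only the $m_i$ variables $a_{i,1}, \ldots, a_{i,m_i}$, and its coefficients $\delta(v_{i,j})$ are all $\pm 1$, hence the functional is nonzero; so the solution space within that block has dimension $m_i - 1$. Summing over $i$ and using $\sum_{i=1}^k m_i = n$ yields $\dim \ab = \sum_{i=1}^k (m_i - 1) = n - k$.

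The argument is largely bookkeeping, so the only point demanding genuine care is confirming that the $k$ label-equations each drop the dimension by exactly one rather than being vacuous or redundant. This is where the decoupling pays off: each $m_i \ge 1$, so every block is nonempty and every associated functional is nonzero, and because the $a_0 = 0$ condition and the $k$ block relations all act on pairwise disjoint sets of variables, there is no hidden linear dependence among them. I expect no real obstacle beyond recording these independence facts cleanly and verifying the sufficiency direction noted above.
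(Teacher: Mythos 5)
Your proof is correct and follows essentially the same route as the paper: bracketing $X$ against the leaves and the central vertex, reducing membership in the abelian factor to the $k$ decoupled equations $\sum_{j=1}^{m_i}\delta(v_{i,j})a_{i,j}=0$, and counting $\sum_{i=1}^k(m_i-1)=n-k$. The only (harmless) difference is that you obtain $a_0=0$ directly from $[X,v_{p,q}]=0$ rather than by invoking Theorem~\ref{Prop:EDRSpan}, and you treat the multiplicity-one labels uniformly instead of splitting them off, which slightly streamlines the bookkeeping.
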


\begin{proof}
Label the edges of $K_{1,n}$ by $Z_1, \ldots, Z_k$ with multiplicities $m_1\geq \cdots \geq m_k$ respectively and let $p$ denote the largest index with $m_p>1$. Then $m_{p+1}, \ldots, m_{k} = 1$ and $m_i>1$ for $1\leq i \leq p$. Observe by Definition~\ref{defn:scripte} that the only vertices in $\E$  are the ends whose incident edge label has multiplicity strictly greater than 1. 
 
Thus, the abelian factor factor is contained in the span of the set $\E = \{v_{i,j} \, | 1\leq i \leq p, 1\leq j\leq m_i\}$.

Let $\ds X = \sum_{i=1}^p \sum_{j=1}^{m_i} a_{i,j}v_{i,j}$. Observe that $[X, v_{i,j}] = 0$ for all $i,j$ since no two  distinct ends $v_{i,j}$ are adjacent and $[v_{i,j}, v_{i,j}]=0$.   So $X$ is in the abelian factor if and only if $\ds 0=[X, v_0]= \sum_{i=1}^p \sum_{j=1}^{m_i} -\delta(v_{i,j} )a_{i,j} Z_i$.  Since $\{Z_i | 1\leq i \leq k\}$ is  a linearly independent set, this occurs precisely when $\ds \sum_{j=1}^{m_i} \delta(v_{i,j}) a_{i,j} = 0$, for each $i$. For each $i$ we have a linear equation with $m_i$ variables; hence, the solution set for each $i$ has dimension $m_i-1$.  Thus, the dimension $D$ of the abelian factor is given by the following computation:

\begin{equation*}
    \begin{split}
     D&= \sum_{i=1}^p (m_i-1)\\
    &=\left(\sum_{i=1}^p m_i\right) -p\\
    &=\sum_{i=1}^p m_i + (k-p)-(k-p) -p\\
    &=\sum_{i=1}^p m_i + \sum_{j=p+1}^k m_j -k+p-p\\
    &=\sum_{i=1}^k m_i -k.
    \end{split}
\end{equation*}

Note that $m_i=1$ for $p+1\leq i \leq k$ so $\sum_{i=p+1}^k m_i = k-p$.

Finally, observe that $\ds\sum_{i=1}^k m_i=n$ since this sum is precisely the number of ends on the star, and thus $D=n-k$. \end{proof}

\begin{cor}\label{Cor:BasisStarGraph}
Let $K_{1,n}$ be a star graph with vertex and edge labeling as in Notation~\ref{notation:star}. Then a basis of the abelian factor in $\n_{K_{1,n}}$ is given by the set \[A=  \bigcup_{i=1}^p \bigcup_{j=1}^{ m_i-1} \left\{\delta(v_{i,m_i}) v_{i,j} - \delta(v_{i, j}) v_{i,m_i}\right\},\]  and an orthonormal basis for the orthogonal complement of the center,  $\mathcal{Z}(\n_{K_{1,n}})^\perp$,  is given by  \[S=\{v_0\}\bigcup_{i=1}^k \left\{\frac{1}{\sqrt{m_i}}\left( \delta(v_{i,1})v_{i,1}+\delta(v_{i,2})v_{i,2}+\ldots+\delta(v_{i,m_i})v_{i,m_i}\right)\right\},\].
\end{cor}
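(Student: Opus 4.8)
The plan is to build directly on the explicit description of $\ab$ extracted from the proof of Theorem~\ref{Thm:DimEdRStar}: an element $X=\sum_{i=1}^p\sum_{j=1}^{m_i}a_{i,j}v_{i,j}$ (necessarily supported on $\E$) lies in $\ab$ if and only if $\sum_{j=1}^{m_i}\delta(v_{i,j})a_{i,j}=0$ for each $1\le i\le p$, and that $\dim\ab=n-k$. For the first assertion I would first check that each proposed vector $\delta(v_{i,m_i})v_{i,j}-\delta(v_{i,j})v_{i,m_i}$ satisfies this defining relation: its only nonzero coefficients are $a_{i,j}=\delta(v_{i,m_i})$ and $a_{i,m_i}=-\delta(v_{i,j})$, so the relevant sum is $\delta(v_{i,j})\delta(v_{i,m_i})-\delta(v_{i,m_i})\delta(v_{i,j})=0$; hence $A\subseteq\ab$.

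Next I would count and establish independence. The cardinality is $|A|=\sum_{i=1}^p(m_i-1)=n-k$ by the same computation as in Theorem~\ref{Thm:DimEdRStar}, which already matches $\dim\ab$, so it suffices to prove $A$ is linearly independent. Here I would exploit that vectors coming from different indices $i$ have disjoint vertex supports, reducing independence to each fixed $i$; and within a fixed $i$, the vector attached to $j$ is the unique member of $A$ containing $v_{i,j}$ (for $1\le j\le m_i-1$) with nonzero coefficient $\delta(v_{i,m_i})\ne 0$, so a vanishing linear combination forces all coefficients to be zero. Thus $A$ is a basis of $\ab$.

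For the orthogonal complement, I would first pin down its dimension: since the paper records $\V=\mathcal{Z}(\n_{K_{1,n}})^\perp\oplus\ab$ as an orthogonal decomposition and $\dim\V=n+1$ (the $n$ ends together with $v_0$), we get $\dim\mathcal{Z}(\n_{K_{1,n}})^\perp=(n+1)-(n-k)=k+1=|S|$. It then suffices to verify that $S$ is orthonormal and that $S$ lies in $\mathcal{Z}(\n_{K_{1,n}})^\perp$, i.e. in $\V$ and orthogonal to $\ab$. Orthonormality is immediate from $\delta(v_{i,j})^2=1$: each listed vector $u_i=\frac{1}{\sqrt{m_i}}\sum_{j=1}^{m_i}\delta(v_{i,j})v_{i,j}$ has squared norm $\frac{1}{m_i}\sum_{j=1}^{m_i}\delta(v_{i,j})^2=1$, the vector $v_0$ is a unit basis vector, and all listed vectors have pairwise disjoint supports, so they are mutually orthogonal.

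Finally I would check orthogonality to $\ab$ by pairing $S$ against the basis $A$. The vector $v_0$ and every $u_{i'}$ with $i'\ne i$ are orthogonal to each $\delta(v_{i,m_i})v_{i,j}-\delta(v_{i,j})v_{i,m_i}$ by disjointness of supports; the only genuine computation is $\langle u_i,\,\delta(v_{i,m_i})v_{i,j}-\delta(v_{i,j})v_{i,m_i}\rangle=\frac{1}{\sqrt{m_i}}\big(\delta(v_{i,j})\delta(v_{i,m_i})-\delta(v_{i,m_i})\delta(v_{i,j})\big)=0$. I expect this last cancellation to be the crux of the second half: it is precisely the same sign identity that placed $A$ inside $\ab$, now read as orthogonality. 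Once the dimension bookkeeping is handled carefully (in particular remembering that $v_0$ contributes to $\dim\V$), the fact that $S$ is an orthonormal set of the correct size lying in $\mathcal{Z}(\n_{K_{1,n}})^\perp$ forces it to be an orthonormal basis of that space.
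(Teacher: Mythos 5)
Your proposal is correct and follows essentially the same route as the paper: you extract the linear system $\sum_{j=1}^{m_i}\delta(v_{i,j})a_{i,j}=0$ from the proof of Theorem~\ref{Thm:DimEdRStar}, verify that $A$ solves it and has the right cardinality and independence, and then establish $S$ by checking orthogonality to $\ab$ together with the dimension count $\dim\mathcal{Z}(\n_{K_{1,n}})^\perp=k+1$. The only difference is that you spell out the ``straightforward linear algebra computation'' and the orthonormality of $S$ in more detail than the paper does, which is harmless.
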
   

\begin{proof} A straightforward linear algebra computation to compute the basis for the nullspace of $\ds{\sum_{j=1}^{m_i} \delta(v_{i,j}) a_{i,j}}$ gives that a basis for the abelian factor is given by  $\ds \bigcup_{i=1}^p \bigcup_{j=1}^{ m_i-1} \{\delta(v_{i,m_i}) v_{i,j} - \delta(v_{i, j}) v_{i,m_i}\}$. Note that if $1\leq i \leq p$ then  $m_i>1$, so   $\delta(v_{i,m_i}) v_{i,j} - \delta(v_{i, j}) v_{i,m_i}\neq 0$.

Next, we show that $S$ is a basis of $\mathcal{Z}(\n_{K_{1,n}})^\perp$. First, observe that for each $i$, $$\left\langle\frac{1}{\sqrt{m_i}}\left( \delta(v_{i,1})v_{i,1}+\ldots+\delta(v_{i,j})v_{i,j}+\ldots+\delta(v_{i,m_i})v_{i,m_i}\right),\delta(v_{i,m_i})v_{i,j}-\delta(v_{i,j})v_{i,m_i} \right\rangle$$ $$=\frac{1}{\sqrt{m_i}}\delta(v_{i,j})\delta(v_{i,m_i})-\frac{1}{\sqrt{m_i}}\delta(v_{i,m_i})\delta(v_{i,j})=0,$$ for all $j=1,\dots,m_{i-1}$. Also,  $\langle v_0,\delta(v_{i,j})v_{i,j}-\delta(v_{i,m_i})v_{i,m_i} \rangle=0$ for all $i,j$. This shows that $S\subseteq \mathcal{Z}(\n_{K_{1,n}})^\perp$. Next, it is clear that $S$ is a linearly independent set since the set $V(K_{1,n})$ is linearly independent. Finally, $\dim\mathcal{Z}(\n_{K_{1,n}})^\perp=\dim \n_{K_{1,n}}-\dim\mathcal{Z}(\n_{K_{1,n}})=(1+n+k)-(k+(n-k))=k+1=|S|$. Therefore, $S$ is a basis for $\mathcal{Z}(\n_{K_{1,n}})^{\perp}$.
\end{proof}

\begin{ex}\label{Ex:StarwithDim10LA} The star graph $G$ in Figure~\ref{fig:StarwithDim10LA} is associated to  a 10 dimensional nilpotent Lie Algebra, which has a 3 dimensional abelian factor  with basis $\{v_{1,1}-v_{1,3},v_{1,2}+v_{1,3},-v_{2,1}-v_{2,2}\}$ and a 4 dimensional $\mathcal{Z}(\n_G)^{\perp}$ with orthonormal basis $\{v_0,\frac{1}{\sqrt{3}}(v_{1,1}-v_{1,2}+v_{1,3}),\frac{1}{\sqrt{2}}(v_{2,1}-v_{2,2}),v_{3,1}\}$.

\begin{figure}[ht]
\begin{center}
\begin{tikzpicture}[->,>=stealth',shorten >=1pt,auto,node distance=3cm,
  thick,main node/.style={circle,fill,scale=.5,draw,font=\sffamily\Large\bfseries}]

\node[main node] (v0) [label=above:{$v_0$}] at (2,2){};
\node[main node] (v1) [label=right:{$v_{1,1}$}] at (4,4){};
\node[main node] (v2)  [label=right:{$v_{1,2}$}] at (4,2){};
\node[main node] (v3)  [label=right:{$v_{1,3}$}] at (4,0) {};
\node[main node] (v4)  [label=left:{$v_{2,1}$}] at(0,4) {};
\node[main node] (v5)  [label=left:{$v_{2,2}$}] at (0,2) {};
\node[main node] (v6)  [label=left:{$v_{3,1}$}] at (0,0) {};
					
\path[every node/.style={font=\sffamily\small}]
(v0) edge node [above] {$Z_1$}  (v1)
(v2) edge node [above] {$Z_1$}  (v0)
(v0)   edge node  [above] {$Z_1$}  (v3)
(v0) edge node [above] {$Z_2$}  (v4)
(v5) edge node [above] {$Z_2$} (v0)
(v0) edge node [below] {$Z_3$} (v6);
\end{tikzpicture}
\caption{Example of a star graph}
\label{fig:StarwithDim10LA}
\end{center}
\end{figure}
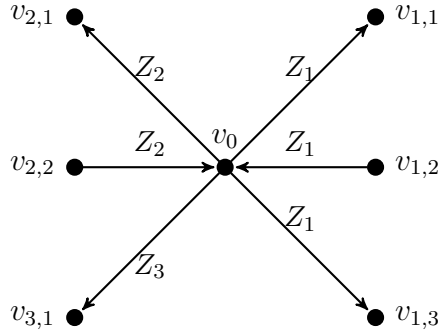
\end{ex}

There are three types of 2-step nilpotent Lie algebras, which depend on the singularity of the $j(Z)$ operator defined in (\ref{jz}).

\begin{defn}\label{defn:singular}
Let $\n=(\mathcal{Z}(\n)^{\perp}\oplus\ab)\oplus [\n,\n]$ be a 2-step nilpotent metric Lie algebra. Then, $\n$ is \emph{singular} if the $j(Z)$ operator restricted to $\mathcal{Z}(\n)^{\perp}$ is singular for all $Z$ in $[\n,\n]$. $\n$ is \emph{nonsingular} if the abelian factor $\ab$ of $\n$ is trivial and the $j(Z)$ operator is nonsingular for all nonzero $Z$ in $[\n,\n]$. By \cite{GM,LP}, if $\n$ is neither singular nor nonsingular, it is \emph{almost nonsingular}.
\end{defn}

Note that $j(Z)X=0$ for all $X\in\ab$. This would result in any 2-step nilpotent Lie algebra with an abelian factor being singular unless we restrict the domain of $j(Z)$ to $\mathcal{Z}(\n)^{\perp}$ as in Definition~\ref{defn:singular}.

We now recall a result of \cite{DDM} where the authors found that if a star graph had unique edge labels, then the resulting Lie algebra was always singular. We then generalize this result when we allow repeated edge labels.

\begin{prop}\label{Prop:StarDDMresult} (\cite{DDM} Ex. 3.4) Let $K_{1,n}$ denote the directed star graph with edge labels $Z_1, \ldots, Z_n$, each with multiplicity 1. Then the resulting Lie algebra $\n_{K_{1,n}}$ is singular. Moreover, if $Z \in [\n_{K_{1,n}},\n_{K_{1,n}}]$ is given by $\sum_{i=1}^n a_iZ_i$, then the eigenvalues of $j(Z)$ are $0, \pm i \sqrt{a_1^2+a_2^2+\cdots + a_n^2} $. 
\end{prop}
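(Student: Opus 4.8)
The plan is to work directly with the star graph $K_{1,n}$ with distinct edge labels $Z_1, \ldots, Z_n$, so that each $m_i = 1$ and $k = n$. By Theorem~\ref{Thm:DimEdRStar}, the abelian factor has dimension $n - k = 0$, so $\ab$ is trivial and $\V = \mathcal{Z}(\n)^\perp$ is spanned by $\{v_0, v_1, \ldots, v_n\}$ (writing $v_i$ for the single end attached by $Z_i$). The brackets are simply $[v_0, v_i] = \delta(v_i) Z_i$, with all other brackets among the $v_i$ vanishing. First I would fix $Z = \sum_{i=1}^n a_i Z_i$ and write down the matrix of $j(Z)$ in the orthonormal basis $\{v_0, v_1, \ldots, v_n\}$ using the defining relation~(\ref{jz}), namely $\langle j(Z)(X), Y\rangle = \langle Z, [X,Y]\rangle$. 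The only nonzero brackets are $[v_0, v_i] = \delta(v_i) Z_i$, so the only nonzero inner products $\langle Z, [v_0, v_i]\rangle$ equal $\delta(v_i) a_i$. This shows $j(Z)(v_0) = \sum_i \delta(v_i) a_i v_i$ and $j(Z)(v_i) = -\delta(v_i) a_i v_0$, with $j(Z)(v_i)$ having no component along any $v_\ell$ for $\ell \neq 0$.

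From this, $j(Z)$ in the basis $(v_0, v_1, \ldots, v_n)$ is a skew-symmetric $(n+1)\times(n+1)$ matrix whose only nonzero entries lie in the first row and first column, with entries $\pm\delta(v_i) a_i$. I would then compute the eigenvalues of this ``arrow-head'' skew-symmetric matrix. Setting $b_i = \delta(v_i) a_i$ (so that $b_i^2 = a_i^2$ since $\delta(v_i) = \pm 1$), the characteristic behavior is governed by the quadratic form $\sum_i b_i^2 = \sum_i a_i^2$. Concretely, if $X = c_0 v_0 + \sum_i c_i v_i$, then $j(Z)(X) = \left(\sum_i b_i c_i\right) \cdot (\text{the }v_0\text{-direction piece})$ mixing only $v_0$ with the span of the vector $(b_1, \ldots, b_n)$. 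Thus $j(Z)$ annihilates the $(n-1)$-dimensional subspace of $\Span\{v_1, \ldots, v_n\}$ orthogonal to $(b_1, \ldots, b_n)$, giving eigenvalue $0$ with multiplicity $n - 1$, while on the remaining $2$-dimensional plane $\Span\{v_0, \sum_i b_i v_i\}$ it acts as a nonzero skew-symmetric operator with eigenvalues $\pm i\sqrt{\sum_i b_i^2} = \pm i\sqrt{a_1^2 + \cdots + a_n^2}$.

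Since $0$ is always an eigenvalue (indeed with multiplicity $n - 1 \geq 0$, and for $n \geq 2$ strictly positive; for $n = 1$ the lone eigenvalue pair still includes the forced zero from odd dimension $n+1 = 2$—here I should double-check the boundary case), $j(Z)$ restricted to $\mathcal{Z}(\n)^\perp = \V$ is singular for every $Z \in [\n_{K_{1,n}}, \n_{K_{1,n}}]$, which is exactly the condition in Definition~\ref{defn:singular} for $\n_{K_{1,n}}$ to be singular. This establishes both claims: the eigenvalue computation directly yields the stated spectrum $0, \pm i\sqrt{a_1^2 + \cdots + a_n^2}$, and the presence of the zero eigenvalue yields singularity.

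The main obstacle I anticipate is purely bookkeeping rather than conceptual: one must verify the arrow-head structure of the matrix carefully and confirm that the $\delta(v_i)$ signs disappear upon squaring, so that the spectrum depends only on $\sum a_i^2$ and not on the edge orientations. The cleanest route is to avoid expanding a determinant and instead argue spectrally by identifying the $(n-1)$-dimensional kernel and the invariant $2$-plane as above; the eigenvalues on the plane then follow immediately from the $2\times 2$ skew-symmetric block with off-diagonal entry $\sqrt{\sum_i b_i^2}$. I would also note that the zero eigenvalue here is forced by dimension parity (an odd-dimensional skew-symmetric operator is always singular), which gives an alternative one-line proof of singularity independent of the detailed spectrum.
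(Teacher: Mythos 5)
Your argument is correct and is essentially the computation the paper itself uses: the proposition is quoted from \cite{DDM} without proof here, but the paper's proof of its generalization, Theorem~\ref{Prop:StarEvals}, writes $j(Z)$ restricted to $\mathcal{Z}(\n_{K_{1,n}})^\perp$ as exactly this skew-symmetric arrow-head matrix (in the orthonormal basis of Corollary~\ref{Cor:BasisStarGraph}) and reads off the characteristic polynomial $(-\lambda)^{k-1}\bigl(\lambda^2+\sum_{\ell} m_\ell a_\ell^2\bigr)$, which reduces to your spectrum when every $m_i=1$. One small caution: your closing ``parity'' remark only applies when $\dim\V=n+1$ is odd (i.e.\ $n$ even), so singularity should rest on the $(n-1)$-dimensional kernel you identified, which requires $n\ge 2$ --- the implicit hypothesis here, since $K_{1,1}$ yields the nonsingular Heisenberg algebra.
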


\begin{thm}\label{Prop:StarEvals} Let $K_{1,n}$ have edges labeled by $Z_1, \ldots, Z_k$ with multiplicities $m_1, \ldots , m_k$ respectively, and let $Z=\sum_{i=1}^k a_iZ_i$ be an arbitrary element of the derived algebra of $\n_{K_{1,n}}$. Then  the eigenvalues of $j(Z)$ are $\ds 0, \pm i\sqrt{\sum_{\ell=1}^k m_{\ell} a_{\ell}^2}$. \end{thm}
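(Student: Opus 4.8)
The plan is to compute the skew-symmetric operator $j(Z)$ explicitly on the orthonormal vertex basis $\{v_0\}\cup\{v_{i,j}\}$ and then read off its spectrum from its very simple rank-two structure. Using Notation~\ref{notation:star}, the only nonzero brackets among vertices are $[v_0,v_{i,j}]=\delta(v_{i,j})Z_i$, so from the defining relation (\ref{jz}) I would first evaluate $\langle j(Z)(v_0),Y\rangle=\langle Z,[v_0,Y]\rangle$ against each basis vector $Y$ to obtain $j(Z)(v_0)=\sum_{i=1}^k\sum_{j=1}^{m_i}\delta(v_{i,j})a_i\,v_{i,j}=:w$, and the analogous computation to obtain $j(Z)(v_{i,j})=-\delta(v_{i,j})a_i\,v_0$. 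In particular the image of $j(Z)$ lies in $\Span\{v_0,w\}$.

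Next I would dispose of the kernel. Since every $j(Z)(v_{i,j})$ is a multiple of $v_0$ and $j(Z)(v_0)=w$, the operator has rank at most $2$; when $Z\neq 0$ the vectors $v_0$ and $w$ are linearly independent (as $w$ is a nonzero combination of the $v_{i,j}$), so the rank is exactly $2$ and $\ker j(Z)$ has dimension $(n+1)-2=n-1$. This already accounts for the eigenvalue $0$ with multiplicity $n-1$.

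Then I would pin down the two remaining eigenvalues by restricting to the plane $U=\Span\{v_0,w\}$. Set $s=\sum_{\ell=1}^k m_\ell a_\ell^2$. A direct computation using $\delta(v_{i,j})^2=1$ and the fact that the index $j$ ranges over $m_i$ values gives $j(Z)(w)=\sum_{i,j}\delta(v_{i,j})a_i\,j(Z)(v_{i,j})=-s\,v_0$, which confirms that $U$ is $j(Z)$-invariant. In the ordered basis $(v_0,w)$ the matrix of $j(Z)|_U$ is therefore $\left(\begin{smallmatrix}0&-s\\ 1&0\end{smallmatrix}\right)$, whose characteristic polynomial is $t^2+s$, with roots $\pm i\sqrt{s}=\pm i\sqrt{\sum_{\ell}m_\ell a_\ell^2}$. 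Combined with the $(n-1)$-fold eigenvalue $0$ this yields the claimed spectrum, and the degenerate case $Z=0$ (where $s=0$ and $j(Z)=0$) is consistent.

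There is no serious obstacle here: the argument is a direct computation. The only points requiring care are the bookkeeping of the orientation signs $\delta(v_{i,j})$ and the multiplicities $m_i$ when assembling $w$ and computing $j(Z)(w)$—this is precisely where the weights $m_\ell$ enter the eigenvalues—and the observation that the rank-two structure forces all remaining eigenvalues to vanish. As a cross-check one may instead compute $\operatorname{tr}(j(Z)^2)=-2s$ and use skew-symmetry (the nonzero eigenvalues occur in conjugate pairs $\pm i\mu$) to recover $\mu=\sqrt{s}$; specializing to all $m_i=1$ recovers Proposition~\ref{Prop:StarDDMresult}.
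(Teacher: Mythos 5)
Your proof is correct: the computations of $j(Z)(v_0)=w$ and $j(Z)(v_{i,j})=-\delta(v_{i,j})a_i v_0$ follow directly from (\ref{jz}), the rank-two observation is sound, and the $2\times 2$ block $\bigl(\begin{smallmatrix}0 & -s\\ 1 & 0\end{smallmatrix}\bigr)$ on $\Span\{v_0,w\}$ gives exactly the claimed nonzero eigenvalues $\pm i\sqrt{s}$ with $s=\sum_\ell m_\ell a_\ell^2$. The route differs from the paper's in the choice of reduction: the paper decomposes $\V=\mathcal{Z}(\n_{K_{1,n}})^\perp\oplus\ab$, notes $j(Z)$ vanishes on $\ab$, and writes $j(Z)|_{\mathcal{Z}^\perp}$ as a $(k+1)\times(k+1)$ bordered matrix in the orthonormal basis $\{v_0\}\cup\{\frac{1}{\sqrt{m_i}}\sum_j\delta(v_{i,j})v_{i,j}\}$ from Corollary~\ref{Cor:BasisStarGraph}, whose characteristic polynomial is $(-\lambda)^{k-1}(\lambda^2+\sum_\ell m_\ell a_\ell^2)$; you instead use the skew-symmetric decomposition $\V=\ker j(Z)\oplus\operatorname{im}j(Z)$ and collapse everything onto the two-dimensional image. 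Your version is self-contained (it does not need Corollary~\ref{Cor:BasisStarGraph}) and slightly more elementary, and your trace identity $\operatorname{tr}(j(Z)^2)=-2s$ is a nice independent check; the paper's version has the advantage that the explicit $(k+1)\times(k+1)$ matrix on $\mathcal{Z}^\perp$ immediately exhibits the $(k-1)$-fold zero eigenvalue \emph{on the complement of the center}, which is what the subsequent singularity corollary actually needs. One very minor bookkeeping note: your $w$ equals $\sum_i a_i\sqrt{m_i}\,X_i$ in the paper's notation, so the two computations are literally the same matrix viewed at different levels of reduction.
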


\begin{proof}  We have $j(Z):\V\rightarrow \V$ where $\V=[\n_{K_{1,n}}, \n_{K_{1,n}}]^\perp = \mathcal{Z}(\n_{K_{1,n}})^\perp \oplus \ab$.  The matrix of $j(Z)$ is identically $0$ when restricted to $\ab$. The matrix of $j(Z)$ restricted to $\mathcal{Z}(\n_{K_{1,n}})^{\perp}$ with respect to the orthonormal basis from Corollary \ref{Cor:BasisStarGraph} is given by 
$$\begin{bmatrix} 0 & a_1\sqrt{m_1} & \dots & a_k \sqrt{m_k}\\
-a_1\sqrt{m_1} & 0 & \dots & 0\\
\vdots & \vdots & \ddots & \vdots\\
-a_k \sqrt{m_k} & 0 & \dots & 0
\end{bmatrix}.$$
The characteristic polynomial of this matrix is $(-\lambda)^{k-1}(\lambda^2+\sum_{\ell=1}^{k}m_{\ell}a_{\ell}^2$).
\end{proof}

\begin{cor} Let $K_{1,n}$ be a star graph with possibly repeated edge labels. Then $\n_{K_{1,n}}$ is singular. 
\end{cor}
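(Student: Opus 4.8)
The plan is to show directly that the $j(Z)$ operator restricted to $\mathcal{Z}(\n_{K_{1,n}})^\perp$ is singular for every $Z$ in the derived algebra, which is precisely the definition of singularity. The key observation is that Theorem~\ref{Prop:StarEvals} already hands us essentially everything we need: it computes the eigenvalues of $j(Z)$ for an arbitrary $Z = \sum_{i=1}^k a_i Z_i$ in the derived algebra, and these eigenvalues are $0, \pm i\sqrt{\sum_{\ell=1}^k m_\ell a_\ell^2}$. The presence of the eigenvalue $0$ is the crux.

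First I would fix an arbitrary nonzero $Z \in [\n_{K_{1,n}}, \n_{K_{1,n}}]$ and recall from the proof of Theorem~\ref{Prop:StarEvals} that the characteristic polynomial of $j(Z)$ restricted to $\mathcal{Z}(\n_{K_{1,n}})^\perp$ is $(-\lambda)^{k-1}\bigl(\lambda^2 + \sum_{\ell=1}^k m_\ell a_\ell^2\bigr)$, where $k$ is the number of distinct edge labels. Since the matrix is presented as a $(k+1)\times(k+1)$ matrix with respect to the orthonormal basis $S$ of Corollary~\ref{Cor:BasisStarGraph}, and the characteristic polynomial carries a factor of $(-\lambda)^{k-1}$, the value $\lambda = 0$ is a root whenever $k \geq 2$. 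More carefully, regardless of $k$, the displayed matrix has first column $(0, -a_1\sqrt{m_1}, \ldots, -a_k\sqrt{m_k})^\top$ and the only other nonzero entries lie in the first row; this arrow-shaped structure forces a nontrivial kernel whenever $k + 1 > 2$, and one checks the $0$ eigenvalue persists in the $k=1$ case as well since the single leaf-class still yields a zero along the diagonal block.

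The step requiring the most care is confirming that the eigenvalue $0$ genuinely corresponds to $j(Z)$ being singular as an operator on $\mathcal{Z}(\n_{K_{1,n}})^\perp$, not merely on all of $\V$. This matters because Definition~\ref{defn:singular} explicitly restricts $j(Z)$ to $\mathcal{Z}(\n_{K_{1,n}})^\perp$ precisely to avoid the trivial singularity coming from the abelian factor $\ab$. Here the factor $(-\lambda)^{k-1}$ in the characteristic polynomial is decisive: it is computed for the matrix of $j(Z)$ already restricted to $\mathcal{Z}(\n_{K_{1,n}})^\perp$ (the $(k+1)$-dimensional space spanned by $S$), so the zero eigenvalue is intrinsic to this restriction and not an artifact of the abelian factor. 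Thus $j(Z)\big|_{\mathcal{Z}(\n_{K_{1,n}})^\perp}$ has nontrivial kernel and is singular.

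I would then conclude the proof in one line: since $Z$ was an arbitrary nonzero element of $[\n_{K_{1,n}}, \n_{K_{1,n}}]$ and $j(Z)\big|_{\mathcal{Z}(\n_{K_{1,n}})^\perp}$ is singular in every case, $\n_{K_{1,n}}$ is singular by Definition~\ref{defn:singular}. I anticipate no serious obstacle; the only subtlety is making sure the degenerate cases ($k=1$, or $Z=0$ which is vacuous) are handled, and that the reader sees the zero eigenvalue lives on the correct subspace. This corollary is essentially a clean packaging of Theorem~\ref{Prop:StarEvals}, generalizing Proposition~\ref{Prop:StarDDMresult} from the unique-label case to arbitrary multiplicities.
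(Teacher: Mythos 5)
Your overall route is the paper's route: the paper's entire proof is that the corollary ``follows directly from the proof of Theorem~\ref{Prop:StarEvals} and Definition~\ref{defn:singular},'' i.e.\ one reads the eigenvalue $0$ off the characteristic polynomial $(-\lambda)^{k-1}\bigl(\lambda^2+\sum_{\ell=1}^k m_\ell a_\ell^2\bigr)$ of $j(Z)\big|_{\mathcal{Z}(\n_{K_{1,n}})^\perp}$, exactly as you do, and your point that this factor lives on the restriction to $\mathcal{Z}(\n_{K_{1,n}})^\perp$ (so the singularity is not an artifact of the abelian factor) is the right thing to emphasize.

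However, your treatment of the $k=1$ case is wrong, and it is precisely the case you singled out as needing care. When $k=1$ the matrix of $j(Z)$ on $\mathcal{Z}(\n_{K_{1,n}})^\perp$ is the $2\times 2$ block
$\bigl(\begin{smallmatrix} 0 & a_1\sqrt{m_1} \\ -a_1\sqrt{m_1} & 0 \end{smallmatrix}\bigr)$,
whose characteristic polynomial is $\lambda^2+m_1a_1^2$ with \emph{no} zero root for $a_1\neq 0$; the exponent $k-1$ in $(-\lambda)^{k-1}$ is $0$, so there is no ``zero along the diagonal block'' and the restricted operator is nonsingular. Concretely, for $K_{1,2}$ with both edges labeled $Z_1$ one checks directly that $j(Z_1)$ acts on $\Span\{v_0,\tfrac{1}{\sqrt2}(v_{1,1}+v_{1,2})\}$ as an invertible rotation-type map, so that algebra is almost nonsingular, not singular (and $K_{1,1}$ is the Heisenberg algebra, which is nonsingular). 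So the zero eigenvalue, and hence the singularity conclusion, genuinely requires $k\geq 2$. This is a latent hypothesis the paper also leaves implicit, but your proof affirmatively asserts the false claim rather than flagging the needed restriction; you should either add the hypothesis that $K_{1,n}$ carries at least two distinct edge labels, or note that the $k=1$ case must be excluded.
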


\begin{proof} The proof follows directly from the proof of Theorem~\ref{Prop:StarEvals} and Definition~\ref{defn:singular}.
\end{proof}
 
We now show that the Lie algebra associated to a star graph with repeated edge labels has a Lie subalgebra with no abelian factor which corresponds to a different star graph with unique weighted edge labels.

Let $\n_G= (\mathcal{Z}(\n_G)^\perp\oplus\ab)\oplus [\n_G, \n_G]$ be a 2-step nilpotent Lie algebra associated to a star graph $G$. Then define $\n_G^*$ to be the Lie algebra given by $\n_G^* = \ab\backslash\n_G$. Thus $\n_G^* \cong \mathcal{Z}(\n_G)^\perp\oplus[\n_G, \n_G]$, and $\n_G^*$ is a Lie algebra which has no abelian factor. Using a slightly modified construction, the Lie algebra $\n_G^*$ can be constructed from graph $ K_{1,k}$ where $k$ is the  number of distinct edge labels in the original graph $G$. In the new graph, all $k$ edges have distinct edge labels $Z_1, \ldots, Z_k$, where each $|Z_i|=1$, and each edge is weighted according to the square root of its multiplicity in the original graph $G$. 

\begin{ex}\label{ex:StarG1} Let $G$ be the graph given in Example \ref{Ex:StarwithDim10LA} and let $\n_G$ denote the resulting Lie algebra. Let $\n_G^* = \mathcal{Z}(\n_G)^\perp\oplus [\n_G, \n_G]$, which  is the Lie algebra resulting from the modified construction described above using directed graph with edge labels of multiplicity one and where each edge label is weighted according to the square root of its multiplicity in graph $G$, as shown in Figure~\ref{Fig:StarG1}
.  The vectors $\{Z_1, Z_2, Z_3\}$ form an orthonormal basis for the derived algebra of $\n_G^*$, which in this case is equal to the center of the Lie algebra in $\n_G^*$. The vectors $\{X_0, X_1, X_2, X_3\}$ are given by the basis for $\mathcal{Z}(\n_G)^\perp$ in Corollary \ref{Cor:BasisStarGraph}. So in this example, $X_0=v_0$, $X_1 = \frac{1}{\sqrt{3}} (v_{1,1} - v_{1,2}+v_{1,3})$, $X_2 = \frac{1}{\sqrt{2}} ( v_{2,1}- v_{2,2})$, and $X_3 = v_{1,3}$.

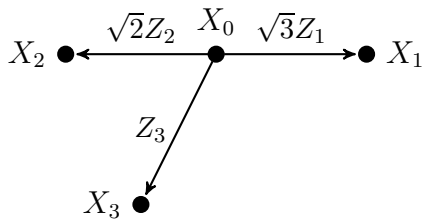
\begin{figure}[ht]
\begin{center}
\begin{tikzpicture}[->,>=stealth',shorten >=1pt,auto,node distance=3cm,
  thick,main node/.style={circle,fill,scale=.5,draw,font=\sffamily\Large\bfseries}]

\node[main node] (X0) [label=above:{$X_0$}] at (2,2){};
\node[main node] (X1)  [label=right:{$X_1$}] at (4,2){};
\node[main node] (X2)  [label=left:{$X_2$}] at (0,2) {};
\node[main node] (X3)  [label=left:{$X_3$}] at (1,0) {};
			
\path[every node/.style={font=\sffamily\small}]
(X0) edge node [above] {$\sqrt{3}Z_1$}  (X1)
(X0) edge node [above] {$\sqrt{2}Z_2$}  (X2)
(X0)   edge node  [left] {$Z_3$}  (X3);

\end{tikzpicture}
\caption{Graph for modified construction used  in Example~\ref{ex:StarG1}}
\label{Fig:StarG1}
\end{center}
\end{figure}
\end{ex}

Next we consider a graph $G$ which is the union of two star graphs whose central vertices are connected by a single edge. 

\begin{ex} The graph in Figure~\ref{Fig:doublestar} is associated to a nilpotent Lie algebra which has 3 dimensional abelian factor  with basis $\{v_{1,1}-v_{1,2},v_{2,1}-v_{2,2},w_{1,1}-w_{1,2}\}$. 
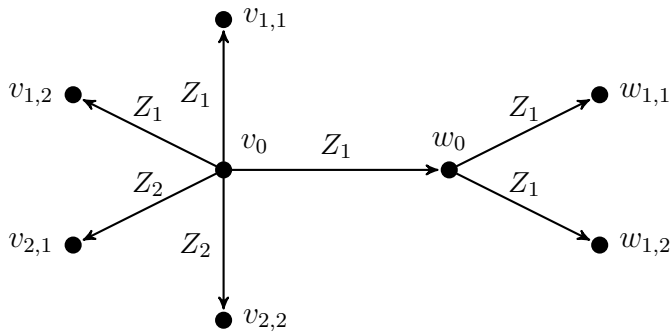
\begin{figure}[ht]
\begin{center}
\begin{tikzpicture}[->,>=stealth',shorten >=1pt,auto,node distance=3cm,
  thick,main node/.style={circle,fill,scale=.5,draw,font=\sffamily\Large\bfseries}]

\node[main node] (v0) [label=above right:{$v_0$}] at (2,2){};
\node[main node] (v1) [label=right:{$v_{1,1}$}] at (2,4){};
\node[main node] (v3)  [label=right:{$v_{2,2}$}] at (2,0) {};
\node[main node] (v4)  [label=left:{$v_{1,2}$}] at(0,3) {};
\node[main node] (v5)  [label=left:{$v_{2,1}$}] at (0,1) {};
\node[main node] (w0)  [label=above:{$w_{0}$}] at (5,2) {};
\node[main node] (v6)  [label=right:{$w_{1,1}$}] at (7,3) {};
\node[main node] (v7)  [label=right:{$w_{1,2}$}] at (7,1) {};
		
  \path[every node/.style={font=\sffamily\small}]
    (v0) edge node [left] {$Z_1$}  (v1)
	(v0)   edge node  [left] {$Z_2$}  (v3)
	(v0) edge node [above] {$Z_1$}  (v4)
	(v0) edge node [above] {$Z_2$} (v5)
	(v0) edge node [above] {$Z_1$} (w0)
	(w0) edge node [above] {$Z_1$} (v6)
	(w0) edge node [above] {$Z_1$} (v7);
\end{tikzpicture}
\caption{Union of two star graphs with adjacent central vertices}
\label{Fig:doublestar}
\end{center}
\end{figure}
\end{ex}

\begin{prop}\label{Prop:doublestar} Let $G$ be the graph which is constructed from two directed, edge-labeled, star graphs $G_1=K_{1,n}$ and $G_2 = K_{1,m}$ where the central vertex of each $G_1$ and $G_2$ is adjacent by a labeled, directed edge.  Let $\ab_i$ denote the abelian factor of $\n_{G_i}$ for $i=1,2$. Then the abelian factor $\ab$ of $\n_G$ is given by $\spn(\ab_1 \cup \ab_2)$. 
\end{prop}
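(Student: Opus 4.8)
The plan is to prove both inclusions $\spn(\ab_1\cup\ab_2)\subseteq\ab$ and $\ab\subseteq\spn(\ab_1\cup\ab_2)$, working throughout with the bracket characterization from Lemma~\ref{Lemma:AbFactor}: an element $X\in\V$ lies in $\ab$ if and only if $[X,Y]=0$ for every vertex $Y$ of $G$. I would fix notation as in Notation~\ref{notation:star}, writing $v_0,w_0$ for the central vertices of $G_1,G_2$, with leaves $\{v_{i,j}\}$ and $\{w_{i,j}\}$, and letting $Z_0$ (with orientation $\eta$) be the label of the connecting edge $v_0w_0$, so that $[v_0,w_0]=\eta Z_0$. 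The structural fact I would invoke repeatedly is that in $G$ every leaf is adjacent only to its own central vertex; hence brackets of two distinct leaves vanish, and a leaf of one star is never adjacent to the other star.

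For the inclusion $\supseteq$, I would take $X_1\in\ab_1$. By Theorem~\ref{Thm:DimEdRStar} and Corollary~\ref{Cor:BasisStarGraph}, $X_1=\sum_{i,j}a_{i,j}v_{i,j}$ is a combination of the leaves of $G_1$ alone (with no $v_0$-component) satisfying $\sum_j\delta(v_{i,j})a_{i,j}=0$ for each label. Then $[X_1,Y]=0$ is immediate for every leaf $Y$ and for $w_0$ by non-adjacency, while $[X_1,v_0]=-\sum_i\bigl(\sum_j\delta(v_{i,j})a_{i,j}\bigr)Z_i=0$ by the defining condition. Since $X_1$ does not involve $v_0$, the connecting edge never enters the computation, so $X_1\in\ab$; by the symmetric argument $\ab_2\subseteq\ab$, and hence $\spn(\ab_1\cup\ab_2)\subseteq\ab$.

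For the reverse inclusion I would take an arbitrary $X=a\,v_0+\sum a_{i,j}v_{i,j}+b\,w_0+\sum b_{i,j}w_{i,j}\in\ab$ and eliminate coefficients with well-chosen brackets. Bracketing $X$ against a single leaf $v_{i,j}$ of $G_1$ leaves only the term $a\,\delta(v_{i,j})Z_i$, forcing $a=0$; bracketing against a leaf of $G_2$ forces $b=0$. With the central-vertex coefficients gone, $[X,v_0]=0$ reduces to $\sum_j\delta(v_{i,j})a_{i,j}=0$ for each $G_1$-label (the $G_2$-leaves contribute nothing to this bracket), and $[X,w_0]=0$ reduces to the analogous conditions for $G_2$. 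These are precisely the conditions characterizing $\sum a_{i,j}v_{i,j}\in\ab_1$ and $\sum b_{i,j}w_{i,j}\in\ab_2$, so $X\in\spn(\ab_1\cup\ab_2)$, completing the proof.

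The step I would flag as the genuine obstacle is that the connecting edge may carry a label already used inside $G_1$ or $G_2$ (as in the preceding example, where the $v_0w_0$ edge is labeled $Z_1$). A priori this couples the two stars: $deg_{Z_1}(v_0)$ in $G$ exceeds its value in $G_1$, and the equation $[X,v_0]=0$ a priori includes a spurious $-b\,\eta Z_0$ term. The resolution is entirely about the order of operations: one must first use the leaf brackets to conclude $a=b=0$, and this conclusion holds \emph{regardless} of the connecting label. Once the central-vertex coefficients vanish, the connecting edge contributes nothing to the remaining equations, and the conditions decouple cleanly into the two independent star conditions. I would therefore carry out the elimination in exactly this sequence and explicitly note that the leaf brackets neutralize the shared-label coupling.
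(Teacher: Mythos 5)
Your proposal is correct and follows essentially the same route as the paper's proof: both directions are handled by the same bracket computations, with the central-vertex coefficients eliminated first via brackets against leaves, after which the conditions decouple into the two independent star conditions. Your explicit remark about the shared-label coupling through the connecting edge is a point the paper handles implicitly by the same ordering of steps, so it is a welcome clarification rather than a different argument.
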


\begin{proof}
Let $G$ be a graph constructed from two star graphs, as described, with vertices labeled as shown in Figure~\ref{Fig:prop8}. Edge labels and edge directions not pictured.

\begin{figure}[ht]
\begin{center}
\begin{tikzpicture}[-,>=stealth',shorten >=1pt,auto,node distance=3cm,
  thick,main node/.style={circle,fill,scale=.5,draw,font=\sffamily\Large\bfseries}]

\node[main node] (v0) [label=above right:{$v_0$}] at (2,2){};
\node[main node] (v1) [label=right:{$v_1$}] at (2,4){};
\node[main node] (v3)  [label=right:{$v_n$}] at (2,0) {};
\node[main node] (v4)  [label=left:{$v_2$}] at(0,3) {};
\node[draw=none,fill=none] (v5)   at (0,1) {$\ddots$};
\node[main node] (w0)  [label=above:{$w_{0}$}] at (5,2) {};
\node[main node] (v6)  [label=right:{$w_1$}] at (7,3) {};
\node[draw=none, fill=none] (v8) [label=right:{$\vdots$}] at (7,2) {};
\node[main node] (v7)  [label=right:{$w_m$}] at (7,1) {};
		
\path[every node/.style={font=\sffamily\small}]
(v0) edge  (v1)
(v0)   edge   (v3)
(v0) edge   (v4)
(v0) edge     (w0)
(w0) edge     (v6)
(w0) edge  (v7);
\end{tikzpicture}
\caption{Vertex labeling for the proof of Proposition \ref{Prop:doublestar}}
\label{Fig:prop8}
\end{center}
\end{figure}
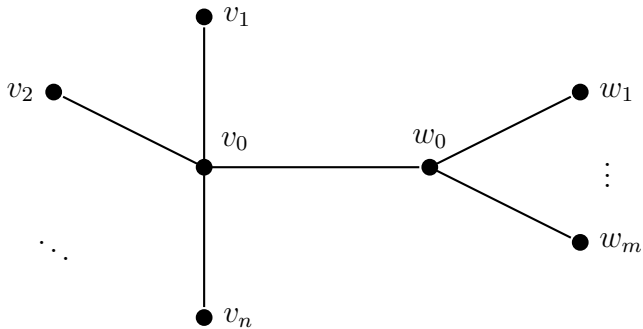

Let $X \in \ab_1 \cup \ab_2$. Assume that $X\in \ab_1$ and write $X= \sum_{i=0}^n a_iv_i$. By Corollary~\ref{Cor:BasisStarGraph}, $a_0=0$. Since $X$ is in $\ab_1$, we have $[X,v_i]=0$ for all $v_i$ in $G_1$. It is also true that $[X,w_j]=0$ for all $w_j$ in $G_2$ because there are no adjacent edges between $v_1, \ldots, v_n$ and vertices in $G_2$. A similar result holds if $X \in \ab_2$. Therefore $X \in \ab$. Since $\ab$ is a vector space containing $\ab_1\cup \ab_2$, we have $\spn(\ab_1 \cup \ab_2) \subseteq \ab$.

To show equality, suppose $\ds X = \sum_{i=0}^n a_i v_i + \sum_{j=0}^m b_j w_j \in \ab$.
For $1\leq i \leq n$, $0= [X, v_i] = [a_0 v_0, v_i]= a_0 [v_0,v_i]$, hence $a_0=0$. Similarly, for $1\leq j \leq m$, $0=[X, w_j] = [b_0w_0, w_j] = b_0[w_0, w_j]$, and hence $b_0=0$. Therefore, $\ds X = \sum_{i=1}^n a_i v_i + \sum_{j=1}^m b_j w_j$.

We will  now show that $\ds
\sum_{i=1}^n a_i v_i \in \ab_1$ and $\ds \sum_{j=1}^m
b_j w_j \in \ab_2$. Since $X \in \ab$,  we have $[X, v_0]=0$. 
Hence, $ [\sum_{i=1}^n a_iv_i, v_0] = 0$ as $[w_j, v_0] = 0$ for all $j = 1, \ldots, m$. Also $[\sum_{i=1}^n a_iv_i, v_k] = 0$ for all $k = 1, \ldots, n$ and therefore $\sum_{i=1}^n a_iv_i \in \ab_1$. Similarly, we can show that $\sum_{j=1}^m b_j w_j \in \ab_2$. Thus, $\spn(\ab_1 \cup \ab_2) = \ab$. 
\end{proof}

\begin{cor}\label{Cor:dimEdRDoubleStar} Let $G_1=K_{1,n}$ be a directed, edge-labeled star graph with $k_1$ edge labels and $G_2=K_{1,m}$ a directed edge-labeled star graph with $k_2$ edge labels. Let $G$ be the graph constructed from $G_1$ and $G_2$ where the central vertex of each $G_1$ and $G_2$ is adjacent by a labeled, directed edge. Then the dimension of the abelian factor of $G$ is $(n+m)-(k_1 + k_2)$.  
\end{cor}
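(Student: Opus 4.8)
The plan is to derive this corollary directly from Proposition~\ref{Prop:doublestar} together with the dimension count in Theorem~\ref{Thm:DimEdRStar}. The essential observation is that the proposition does more than identify $\ab$ with $\spn(\ab_1\cup\ab_2)$ as a set: the argument there shows that an element of $\ab$ decomposes uniquely as a sum of a vector in $\ab_1$ (supported on $v_1,\dots,v_n$) and a vector in $\ab_2$ (supported on $w_1,\dots,w_m$). Since the vertex sets of $G_1$ and $G_2$ are disjoint, the subspaces $\ab_1$ and $\ab_2$ live in complementary coordinate directions of $\V$, so their spans intersect trivially and the union spans an internal direct sum.

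Concretely, I would first observe that $\ab_1\subseteq\spn\{v_1,\dots,v_n\}$ and $\ab_2\subseteq\spn\{w_1,\dots,w_m\}$, which follows from Corollary~\ref{Cor:BasisStarGraph} (the central-vertex coefficient vanishes, and the abelian factor basis vectors for each star are supported only on that star's end vertices). Because $\{v_1,\dots,v_n,w_1,\dots,w_m\}$ is a linearly independent set of basis vectors, we get $\ab_1\cap\ab_2=\{0\}$, hence
\begin{equation*}
\dim\ab=\dim\spn(\ab_1\cup\ab_2)=\dim\ab_1+\dim\ab_2.
\end{equation*}
Then I would apply Theorem~\ref{Thm:DimEdRStar} to each star factor: since $G_1=K_{1,n}$ has $k_1$ distinct edge labels, $\dim\ab_1=n-k_1$, and similarly $\dim\ab_2=m-k_2$. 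Adding these gives $\dim\ab=(n-k_1)+(m-k_2)=(n+m)-(k_1+k_2)$, as claimed.

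I do not anticipate a genuine obstacle here, since the corollary is essentially a bookkeeping consequence of results already proved. The one point requiring a little care is confirming that the connecting edge between $v_0$ and $w_0$ does not enlarge either $\ab_1$ or $\ab_2$ or introduce new contributions to $\ab$. This is exactly what the proof of Proposition~\ref{Prop:doublestar} establishes when it forces $a_0=b_0=0$ and reduces each summand back to an abelian factor of the individual star; so I would simply invoke that proposition rather than re-examine the connecting edge. A secondary subtlety worth noting is that Theorem~\ref{Thm:DimEdRStar} counts \emph{distinct} edge labels in each star, and one should be consistent about whether the labels shared across $G_1$ and $G_2$ (for instance the common use of $Z_1$) are counted separately as $k_1$ and $k_2$ within each star; since $\ab_1$ and $\ab_2$ are computed intrinsically from $G_1$ and $G_2$ as standalone star graphs, this is automatic and poses no difficulty.
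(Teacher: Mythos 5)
Your proposal is correct and follows essentially the same route as the paper: the paper's own (one-line) proof likewise combines Proposition~\ref{Prop:doublestar} with the dimension count $n-k_1$ and $m-k_2$ from Theorem~\ref{Thm:DimEdRStar} and adds them. You merely make explicit the justification — that $\ab_1$ and $\ab_2$ are supported on disjoint vertex sets, so the span of their union has dimension equal to the sum — which the paper leaves implicit.
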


\begin{proof}
The dimension of the span of $E_1 \cup E_2$ is given by the sum of $dim(E_1)= n-k_1$ and $dim(E_2) = m-k_2$.
\end{proof}

Note that the quantity $k_1+k_2$ in Corollary~\ref{Cor:dimEdRDoubleStar} is not necessarily the number of distinct edge labels in the graph $G$. Edge labels in $G_1$ and $G_2$ are not necessarily distinct and the label of the edge connecting the central vertices is irrelevant for this result. 

Observe that the result on the double star graphs cannot be extended to trees in general. As we see from the next example, there are trees with an internal vertex that is involved in the abelian factor. 

\begin{ex}
Let $\n_G$ be the Lie algebra constructed from the graph $G$ in Figure~\ref{Fig:tree}. In this graph,  $\E=\{w_0,v_1,v_2,u_1,u_2\}$. The corresponding $\n_G$ has a three dimensional abelian factor with basis $\{v_1-v_2, v_2+w_0+u_2, u_1-u_2\}$. 

\begin{figure}[ht]
\begin{center}
\begin{tikzpicture}[->,>=stealth',shorten >=1pt,auto,node distance=3cm,
  thick,main node/.style={circle,fill,scale=.5,draw,font=\sffamily\Large\bfseries}]

\node[main node] (v0) [label=below:{$v_0$}] at (2,2){};
\node[main node] (v1)  [label=left:{$v_{1}$}] at(0,3) {};
\node[main node] (v2)  [label=left:{$v_{2}$}] at (0,1) {};
\node[main node] (w0)  [label=below:{$w_{0}$}] at (4,2) {};
\node[main node] (u0) [label=below:{$u_0$}] at (6,2) {};
\node[main node] (u1)  [label=right:{$u_{1}$}] at (8,3) {};
\node[main node] (u2)  [label=right:{$u_{2}$}] at (8,1) {};
		
\path[every node/.style={font=\sffamily\small}]
(v0) edge  node [above right] {$Z_1$}  (v1)
(v0)   edge  node  [above left] {$Z_1$}  (v2)
(v0) edge   node [above] {$Z_1$}  (w0)
(u0) edge  node [above] {$Z_2$} (w0)
(u0) edge  node [above left] {$Z_2$} (u1)
(u0) edge  node [above right] {$Z_2$} (u2);
\end{tikzpicture}
\caption{Example of a tree that is not a star graph}
\label{Fig:tree}
\end{center}
\end{figure}
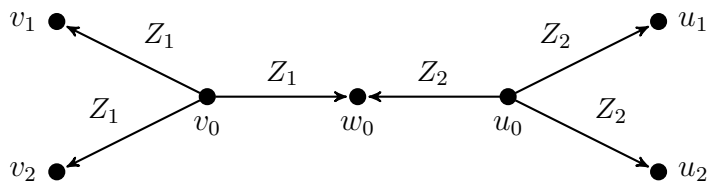
\end{ex}

\section{\textbf{Cycles}}\label{Sec:cycles}

Let $C_n$ be a cycle with $n$ vertices and $n$ edges, where we label the vertices  $\{v_1,v_2,\dots, v_n\}$, with $v_i$ adjacent to $v_{i+1}$ for $1\leq i\leq n-1$ and $v_n$ adjacent to $v_1$, without specifying the direction of the edges. Denote the edge labels by $Z_j$ for $1\leq j\leq p$ for some positive integer $p$. 

\begin{defn}\label{defn:standorcn} In the cycle $C_n$, define an edge to have \underline{standard orientation} if it is directed as $(v_i,v_{i+1})$ for $1\leq i \leq n-1$ or $(v_n,v_1)$. An edge has \underline{opposite standard orientation} if it is directed as $(v_{i+1},v_i)$ for $1 \leq i \leq n-1$ or $(v_1,v_n)$. We define the graph $C_n$ to have \underline{standard orientation} if all edges have standard orientation.  \end{defn}

The cycle in Example \ref{Ex:Ex3} has three edges with standard orientation and one with opposite standard orientation. The cycle in Example \ref{exC5} has standard orientation. 

\begin{thm}\label{Prop:Cnstandard}  Let $C_n$ be a cycle with standard orientation and all edge labels the same. Let $\ab$ denote the abelian factor of the associated Lie algebra $\n_{C_n}$. 
\begin{enumerate}
\item If $n$ is even, the dimension of $\ab$ is 2 with basis  $\displaystyle \left\{ \sum_{i=1}^{k}v_{2i-1}, \sum_{i=1}^kv_{2i}\right\}$, where $n=2k$.
\item If $n$ is odd, the dimension of $\ab$ is 1 with basis $\displaystyle \left\{ \sum_{i=1}^{n}v_i\right\}$.
\end{enumerate}
\end{thm}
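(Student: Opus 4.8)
The plan is to compute the abelian factor directly from the bracket relations rather than through the $j(Z)$ operator, since the graph has only a single edge label $Z$. Write a general element of $\V$ as $X = \sum_{i=1}^n a_i v_i$. Because the derived algebra $[\n_{C_n},\n_{C_n}] = \w$ is already central and $\V = \spn\{v_1,\ldots,v_n\}$ with $\w\cap\V = 0$, we have $\ab = \gcenter(\n_{C_n})\cap\V$, so $X$ lies in $\ab$ precisely when $[X,v_j] = 0$ for every $j$. (Equivalently one may invoke Lemma~\ref{Lemma:AbFactor} with the single label $Z$.)

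Next I would record the brackets coming from standard orientation. With every edge labeled $Z$ and oriented as $(v_i,v_{i+1})$, where indices are read modulo $n$ so that $(v_n,v_1)$ is standard as well, the only nonzero brackets involving $v_j$ are $[v_{j-1},v_j] = Z$ and $[v_j,v_{j+1}] = Z$, whence $[v_{j+1},v_j] = -Z$. Therefore $[X,v_j] = a_{j-1}Z - a_{j+1}Z = (a_{j-1}-a_{j+1})Z$, and since $Z\neq 0$ the membership condition $[X,v_j]=0$ for all $j$ becomes the cyclic system $a_{j-1} = a_{j+1}$ for all $j$, equivalently $a_i = a_{i+2}$ for all $i$ modulo $n$.

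The remaining work is an orbit analysis of this recurrence. The shift permutation $i \mapsto i+2$ of $\Z/n\Z$ has exactly $\gcd(2,n)$ orbits, and the relation $a_i = a_{i+2}$ forces the coefficients to be constant on each orbit. When $n$ is odd, $\gcd(2,n)=1$ gives a single orbit, so all $a_i$ are equal and $\ab$ is one-dimensional, spanned by $\sum_{i=1}^n v_i$. When $n = 2k$ is even, $\gcd(2,n)=2$ gives two orbits, namely the odd-indexed and the even-indexed vertices; the coefficients are then constant on each parity class, yielding two free parameters and the basis $\left\{\sum_{i=1}^{k} v_{2i-1},\ \sum_{i=1}^k v_{2i}\right\}$. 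I would close by checking directly that these candidate vectors satisfy $a_i = a_{i+2}$ (immediate) and are linearly independent (clear, since the $v_i$ belong to an orthonormal basis of $\n_{C_n}$).

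The one place that demands care is the index bookkeeping at the wrap-around edge $(v_n,v_1)$: one must confirm that reading indices modulo $n$ keeps the relation $a_{j-1}=a_{j+1}$ valid at $j=1$ and $j=n$, which is exactly what makes the system genuinely cyclic and collapses all coefficients to a single value in the odd case. This is the crux that distinguishes the two parity cases; everything else is routine linear algebra.
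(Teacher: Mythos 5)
Your proposal is correct and follows essentially the same route as the paper: both derive the cyclic relations $a_{j-1}=a_{j+1}$ from $[X,v_j]=0$ and then resolve them by parity of $n$. Your packaging of the final step as an orbit count for the shift $i\mapsto i+2$ on $\Z/n\Z$ (giving $\gcd(2,n)$ orbits) is a slightly tidier way to phrase the paper's case analysis, but it is the same argument.
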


\begin{proof}
Assume $C_n$ has the standard orientation and all edges have the same label, $Z$. Then the brackets in the Lie algebra $\n_{C_n}$ are given by  $[v_i,v_{i+1}]=[v_n,v_1]=Z$ for $i=1,\dots,n-1$, with skew symmetry and linearity, and all other brackets are zero.  Let $X=\sum_{i=1}^n a_iv_i$ be an element in the abelian factor of  $\n_{C_n}$.  Consider $[X,v_i]$ for each $i=1,\dots, n$. By the adjacency  and directions  of edges in the graph $C_n$, we have $[X,v_i]=(a_{i-1}-a_{i+1})Z$ for $i=2,\dots,n-1$,  $[X,v_1]=(a_n-a_2)Z$, and $[X,v_n]=(a_{n-1}-a_1)Z$.  Since $[X, v_i] = 0$ for all $i = 1, \ldots, n$, we have the following coefficient relationships: $a_{i-1}=a_{i+1}$ for $i=2,\dots, n-1$, $a_2=a_n$ and $a_1=a_{n-1}$.

If $n$ is even, the above computation shows that the abelian factor is spanned by two vectors, one is the sum of the even indexed vertices and the other is the sum of the odd indexed vertices. This results in a 2-dimensional abelian factor with the basis given above. 

If $n$ is odd, then  using  $a_2=a_n$, we obtain that each even indexed term is equal to each odd indexed term. Therefore, all $a_i$ are equal.  Thus there is a 1-dimensional abelian factor with the basis given above.
\end{proof}

\begin{prop}\label{Cor:Cnnonstandardeven}
Let $C_n$ be a cycle with all edge labels the same.  Let $m$ be the number of edges which have opposite standard orientation.  Let $\ab$ denote the abelian factor of the associated Lie algebra $\n_{C_n}$.
\begin{enumerate}
    \item\label{CorPart:dimA0} If $n=2k$ is even and $m$ is odd, the dimension of $\ab$ is zero. 
    \item If $n=2k$ is even and $m$ is even, the dimension of $\ab$ is 2.  The basis is \[\displaystyle \left\{ \sum_{i=1}^{k}\epsilon_{2i-1}v_{2i-1}, \sum_{i=1}^k\epsilon_{2i}v_{2i}\right\},\] where $\epsilon_j=\pm 1$.
    \item If $n=2k+1$ is odd, the dimension of $\ab$ is 1 with basis $\displaystyle \left\{ \sum_{i=1}^{n}\epsilon_iv_i\right\}$ where $\epsilon_i=\pm 1$. 
\end{enumerate}
\end{prop}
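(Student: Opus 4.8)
The plan is to encode the orientation of $C_n$ by a sign $\eta_i \in \{+1,-1\}$ for each edge, where $[v_i, v_{i+1}] = \eta_i Z$ (indices mod $n$, with $\eta_i = +1$ for standard and $\eta_i = -1$ for opposite standard orientation), so that $m = \#\{i : \eta_i = -1\}$ and the ``holonomy'' of the cycle is $\prod_{i=1}^n \eta_i = (-1)^m$. Exactly as in the proof of Theorem~\ref{Prop:Cnstandard}, a vector $X = \sum_{i=1}^n a_i v_i$ lies in $\ab$ iff $[X, v_j] = 0$ for every $j$; computing this bracket from the two edges incident to $v_j$ yields the single family of equations $a_{j-1}\eta_{j-1} = a_{j+1}\eta_j$ for all $j$ (indices mod $n$). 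First I would record this recurrence, which is the only geometric input needed; everything after is linear algebra.

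The key device is a sign change (a ``gauge transformation'') that absorbs the $\eta_i$. Define $\sigma_i$ by $\sigma_1 = 1$ and $\sigma_{i+1} = \sigma_i \eta_i$, and set $c_i = \sigma_i a_i$ (so $a_i = \sigma_i c_i$, as $\sigma_i^2 = 1$). Using $\sigma_{j-1}\eta_{j-1} = \sigma_j$ and $\sigma_{j+1}\eta_j = \sigma_j$, the recurrence collapses to the standard-orientation recurrence $c_{j-1} = c_{j+1}$, i.e. $c$ is $2$-periodic as a bi-infinite sequence. I expect the main obstacle, and the whole source of the trichotomy, to be the wrap-around: $\sigma$ is only quasi-periodic, with $\sigma_{i+n} = \left(\prod_{\ell=1}^n \eta_\ell\right)\sigma_i = (-1)^m \sigma_i$, so the periodicity $a_{i+n} = a_i$ forces the boundary condition $c_{i+n} = (-1)^m c_i$ on the transformed variables. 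Thus the parity of $m$ enters not through the local equations but through this monodromy factor, and getting the bookkeeping exactly right (rather than losing a sign at the closing edge) is the delicate point.

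It then remains to solve $c_{j+1} = c_{j-1}$ subject to $c_{i+n} = (-1)^m c_i$, which I would split by the parity of $n$. If $n = 2k$ is even, $2$-periodicity already gives $c_{i+n} = c_i$, so the boundary condition reads $c_i = (-1)^m c_i$: for $m$ even it is vacuous and $(c_1, c_2)$ are free (dimension $2$), while for $m$ odd it forces $c \equiv 0$ (dimension $0$). If $n = 2k+1$ is odd, $2$-periodicity gives $c_{i+n} = c_{i+1}$, so the boundary condition becomes $c_{i+1} = (-1)^m c_i$, which pins down the whole sequence from $c_1$ (dimension $1$); one checks this is consistent with the closing relation because $m(n-1)$ is even. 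Translating back via $a_i = \sigma_i c_i$ turns the constant or alternating solutions for $c$ into the claimed bases: the two generators become $\sum_{i=1}^k \sigma_{2i-1} v_{2i-1}$ and $\sum_{i=1}^k \sigma_{2i} v_{2i}$ in the even case, and $\sum_{i=1}^n \sigma_i (-1)^{m(i-1)} v_i$ in the odd case, with all coefficients $\epsilon_j = \pm 1$. As a consistency check, when $m$ is even the sign change $v_i \mapsto \sigma_i v_i$ is genuinely $n$-periodic and realizes an isomorphism onto the standard-orientation cycle, so these cases could alternatively be deduced directly from Theorem~\ref{Prop:Cnstandard}.
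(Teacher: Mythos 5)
Your proof is correct, and while it starts from the same recurrence as the paper ($[X,v_j]=0$ giving $a_{j-1}\eta_{j-1}=a_{j+1}\eta_j$, which is the paper's $a_i=\delta_i\delta_{i+1}a_{i+2}$), you organize the sign bookkeeping differently. The paper argues informally that each reversed edge flips the sign of exactly one relation in the odd-indexed chain and one in the even-indexed chain, so chaining the relations around each parity class accumulates a net sign of $(-1)^m$, which forces $a_i=-a_i$ when $n$ is even and $m$ is odd. Your gauge substitution $c_i=\sigma_i a_i$ with $\sigma_{i+1}=\sigma_i\eta_i$ packages that same accumulated sign as the monodromy $\sigma_{i+n}=(-1)^m\sigma_i$, reducing everything to the standard-orientation recurrence $c_{j-1}=c_{j+1}$ plus one boundary condition. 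This buys you three things the paper's version does not make explicit: a clean separation of the local equations from the global obstruction, the exact values of the signs $\epsilon_j=\sigma_j$ (resp.\ $\sigma_i(-1)^{m(i-1)}$ in the odd case), which the paper leaves as unspecified $\pm 1$'s, and the observation that for $m$ even the substitution is an honest isomorphism with the standard-orientation cycle, so those cases follow from Theorem~\ref{Prop:Cnstandard} directly. The consistency check $m(n-1)$ even in the odd case is the right one and closes the argument.
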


\begin{proof} For $1 \leq i \leq n-1,$ define $\delta_i=1$ if the edge $v_iv_{i+1}$ has standard orientation and $-1$ if the edge has opposite standard orientation. Similarly, define $\delta_n=1$ if the edges $v_nv_1$ has standard orientation and $-1$ if the edge has opposite standard orientation. In this more general case, the bracket relations are given by $[v_i, v_{i+1}] = \delta_i Z$ for $1\leq i \leq n-1$ and $[v_{n}, v_1]= \delta_n Z$. Let $X=\sum_{i=1}^n a_iv_i$ be an element in the abelian factor of $\n_{C_n}$. Following the same calculations as in the proof of Theorem~\ref{Prop:Cnstandard}, we get the following relations between the coefficients of $X$: $a_i=\delta_i\delta_{i+1}a_{i+2}$ for $1 \leq i \leq n-2$, $a_{n-1}=\delta_{n-1}\delta_n a_1$, and $a_n=\delta_n\delta_1 a_2$. 

Note that for every change in orientation of an edge, two of these relations will change signs, one for coefficients with even index and one for coefficients with odd index. In the case where $n$ is even and $m$ is odd, this means that an odd number of coefficient relations will change signs in both the odd indexed coefficients and even indexed coefficients. This will result in the equations $a_2=\pm a_4=\ldots=\pm a_n=-a_2$ and $a_1=\pm a_3=\ldots=\pm a_{n-1}=-a_1$, which means $a_i=0$ for all $1\leq i \leq n$ and hence there is no abelian factor. In the other two cases, these relations give us the basis given in the corollary. 
\end{proof}

Example \ref{Ex:Ex3} illustrates case \ref{CorPart:dimA0} in Proposition~\ref{Cor:Cnnonstandardeven}. 

In the next results, we investigate the case where $C_n$ has at least two distinct edge labels. We will need the following terminology.

A {\em path} is a sequence of distinct adjacent edges within a graph, disregarding the direction of the edges. The {\em length of a path} is equal to the number of edges in the path. 
\begin{defn}\label{defn:samelabelpath} A \emph{same-labeled path} is a path where the label on each edge is the same and there are exactly two vertices in the path of edge-label degree 1 for that label, i.e. the path is of maximal length.
\end{defn}

\begin{prop} If $C_n$ is a cycle with at least two distinct edge labels and at most one same-labeled path on 2 or more edges, then $\n_{C_n}$ has no abelian factor. \label{cyclemorecolors} \end{prop}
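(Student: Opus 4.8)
The plan is to turn membership in the abelian factor into a scalar system on the coefficients of $X=\sum_{i=1}^n a_i v_i$, and then exploit the rigidity forced by the ``junction'' vertices where two differently-labeled edges meet. Since $\ab\subseteq\V$ consists of central elements and $\n_{C_n}$ is $2$-step, $X\in\ab$ if and only if $[X,v_i]=0$ for every vertex $v_i$. Writing $e_{i-1},e_i$ for the two edges incident to $v_i$, with labels $Z_{\ell_{i-1}},Z_{\ell_i}$ and orientation signs $\delta_{i-1},\delta_i$, I would compute $[X,v_i]=\delta_{i-1}a_{i-1}Z_{\ell_{i-1}}-\delta_i a_{i+1}Z_{\ell_i}$, consistent with the computation in the proof of Theorem~\ref{Prop:Cnstandard}. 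Because the edge labels are orthonormal, this vanishes in exactly two ways: if the two incident labels differ (a \emph{junction} vertex) it forces $a_{i-1}=a_{i+1}=0$, whereas if they agree (an interior vertex of a monochromatic stretch) it only yields $\delta_{i-1}a_{i-1}=\delta_i a_{i+1}$, a relation linking coefficients of the same parity along the stretch.

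Next I would decompose the edges of $C_n$ into maximal monochromatic arcs. Having at least two labels guarantees at least two such arcs, and by maximality consecutive arcs carry different labels; the hypothesis of at most one same-labeled path on $\ge 2$ edges says at most one arc has length $\ge 2$. A short check then shows that a vertex fails to be a junction exactly when both of its incident edges share a label, i.e.\ both lie in a common arc of length $\ge 2$, i.e.\ both lie in the unique long arc; hence \emph{every vertex outside the interior of the unique long arc is a junction}. In the degenerate case where no arc is long, every vertex is a junction and the relations $a_{i-1}=a_{i+1}=0$ immediately force $X=0$, so $\ab=0$.

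In the remaining case, let the long arc have edges $e_s,\dots,e_{t-1}$ with $t=s+L$, so its interior vertices are $v_{s+1},\dots,v_{t-1}$. Applying the junction relations at all non-interior vertices kills every coefficient except possibly the ``deep interior'' block $a_{s+2},\dots,a_{t-2}$. To eliminate these I would first use the junction $v_s$ (giving $a_{s+1}=0$) and the junction $v_{s-1}$, the far endpoint of the single edge preceding the arc (giving $a_s=0$), and then propagate along the monochromatic relations $\delta_{j-1}a_{j-1}=\delta_j a_{j+1}$ at $v_{s+1},v_{s+2},\dots$, inductively forcing $a_{s+k}=0$ for every $k$. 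Combined with the junction kills this gives $a_i=0$ for all $i$, whence $\ab=0$.

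The step I expect to be the main obstacle is precisely the elimination of the deep interior: the two junctions at the ends of the long arc control only one parity class of interior coefficients (when $L$ is even they pin down the \emph{same} class), so it is essential to bring in the junction $v_{s-1}$ sitting just beyond the arc to obtain $a_s=0$ and thereby reach the other parity class. This forces care with the cyclic indexing, in particular the degenerate configuration in which the complement of the long arc consists of a single edge, so that $v_{s-1}=v_t$; there one checks directly that the two arc-endpoint junctions already supply both $a_s=0$ and $a_{s+1}=0$, after which the same propagation finishes the argument.
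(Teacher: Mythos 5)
Your proposal is correct and takes essentially the same route as the paper: the paper first localizes the abelian factor to the span of $\E$ (which is precisely your ``deep interior'' of the unique long monochromatic arc, via Theorem~\ref{Prop:EDRSpan}) and then kills the remaining coefficients by bracketing against the vertices of that arc, which is exactly your junction-kill-plus-propagation step. Your version is more explicit about orientation signs, the parity issue at the two ends of the long arc, and the degenerate case where the complement of the arc is a single edge, but the underlying argument is the same.
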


\begin{proof}
Let $v_1 - v_2 - \cdots - v_{k+1}$ be the only  same-labeled path in $C_n$ of length 2 or more.

Note that if $k<4$, the path has length 3 or less, and then by Definition~\ref{defn:scripte}, $\E=\emptyset$, hence by Corollary~\ref{cor:nullscriptE}, there is no abelian factor.

If $k\geq 4$ then  $\E=\{v_3, v_4, \ldots , v_{k-1}\}$. Solving $$0= \left[a_3v_3+a_4v_4+\ldots+a_{k-1}v_{k-1},v_{\ell}\right]$$ for each $\ell=1,2,\ldots,k+1$ yields $a_i=0$ for all $3\leq i \leq k-1$. Therefore, by Proposition~\ref{prop:EEdr}, $\n_{C_n}$ has no abelian factor.
\end{proof}

\begin{lemma}\label{lem:everyother} Let $P_n$ be the path on $n\geq 3$ vertices labeled $v_1, \ldots, v_n$ with directed edges $(v_i,v_{i+1})$ for $i=1,\ldots, n-1$ and all having the same edge label $Z$. Then an element of the abelian factor $X=\sum_{i=1}^n a_iv_i$ of the associated Lie algebra $\n_{P_n}$ will have $a_{i-1}=a_{i+1}$, for $2 \leq i \leq n-1$.
\end{lemma}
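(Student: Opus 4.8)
The plan is to compute the bracket $[X, v_i]$ for each interior vertex and use the fact that $X$ lies in the abelian factor to force all these brackets to vanish. First I would recall from Definition~\ref{defn:construction} that since all edges carry the same label $Z$ and have standard orientation $(v_i, v_{i+1})$, the nonzero brackets are $[v_i, v_{i+1}] = Z$ for $1 \leq i \leq n-1$, together with skew symmetry. The path $P_n$ differs from the cycle $C_n$ only in that the endpoints $v_1$ and $v_n$ are not adjacent, so each interior vertex $v_i$ (for $2 \leq i \leq n-1$) is adjacent to exactly $v_{i-1}$ and $v_{i+1}$.

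Next I would evaluate $[X, v_i]$ for a fixed interior index $i$ with $2 \leq i \leq n-1$. Writing $X = \sum_{j=1}^n a_j v_j$ and using bilinearity, only the terms $a_{i-1}[v_{i-1}, v_i]$ and $a_{i+1}[v_{i+1}, v_i]$ survive, since $[v_j, v_i] = 0$ whenever $v_j$ is not adjacent to $v_i$. We have $[v_{i-1}, v_i] = Z$ and, by skew symmetry, $[v_{i+1}, v_i] = -[v_i, v_{i+1}] = -Z$. Hence
\[
[X, v_i] = a_{i-1} Z - a_{i+1} Z = (a_{i-1} - a_{i+1}) Z.
\]
Since $X$ is in the abelian factor, $[X, v_i] = 0$, and since $Z \neq 0$ spans a one-dimensional subspace of the derived algebra (so $\{Z\}$ is linearly independent), this forces $a_{i-1} - a_{i+1} = 0$, i.e. $a_{i-1} = a_{i+1}$, for each $2 \leq i \leq n-1$.

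I do not expect any serious obstacle here; this is the path analogue of the coefficient relations derived in the proof of Theorem~\ref{Prop:Cnstandard}, with the simplification that the endpoints contribute no wrap-around relations. The only point requiring a small amount of care is confirming that the brackets at the boundary indices $i = 1$ and $i = n$ (which involve only one neighbor each and therefore do not yield the ``every other'' relation) are correctly excluded from the claim; the statement restricts to $2 \leq i \leq n-1$ precisely because $v_1$ and $v_n$ have only one neighbor along the path. Thus the lemma follows immediately once the bracket computation is in place.
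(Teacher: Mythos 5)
Your proposal is correct and follows essentially the same approach as the paper: compute $[X, v_i]$ for each interior vertex $v_i$, note that only the neighbors $v_{i-1}$ and $v_{i+1}$ contribute, obtain $(a_{i-1}-a_{i+1})Z = 0$, and conclude $a_{i-1}=a_{i+1}$. The additional remarks about the boundary vertices and the comparison with the cycle case are accurate but not needed.
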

\begin{proof}
For $2 \leq i\leq n-1$, $[X,v_i]=0$ since $X$ is in the abelian factor.  Then since $N(v_i) = \{v_{i-1}, v_{i+1}\}$ and all edges have label $Z$, we have  $[X, v_i] = a_{i-1}Z-a_{i+1}Z=0$. Therefore,  $a_{i-1}=a_{i+1}$ for $2\leq i \leq n-1$.
\end{proof}

\begin{prop}\label{prop:Cnoddpath} Let $C_n$ be a cycle with standard orientation and at least two distinct edge labels. The Lie algebra $\n_{C_n}$ has a nontrivial abelian factor if and only if the induced subgraph of $C_n$ on each edge label consists of the disjoint union of paths each with an even edge-length. 
\end{prop}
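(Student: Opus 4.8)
The plan is to translate membership in the abelian factor into a linear system on the vertex coefficients, read off the constraint imposed at each vertex according to whether its two incident edges share a label, and then decompose $C_n$ into its same-labeled paths (Definition~\ref{defn:samelabelpath}) to analyze that system combinatorially. By Lemma~\ref{Lemma:AbFactor}, $X=\sum_{i=1}^n a_iv_i$ lies in $\ab$ precisely when $[X,v_i]=0$ for every $i$. Writing $Z^{(i)}$ for the label of the standardly oriented edge $(v_i,v_{i+1})$ (indices mod $n$), the only brackets contributing to $[X,v_i]$ come from the two neighbors $v_{i-1},v_{i+1}$, giving $[X,v_i]=a_{i-1}Z^{(i-1)}-a_{i+1}Z^{(i)}$. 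Since the edge labels are orthonormal, hence linearly independent, this vanishes in exactly two ways: if the two edges at $v_i$ carry different labels ($Z^{(i-1)}\neq Z^{(i)}$) then both $a_{i-1}=0$ and $a_{i+1}=0$, while if they carry the same label then $a_{i-1}=a_{i+1}$, recovering Lemma~\ref{lem:everyother} along each same-labeled path.

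Next I would observe that the vertices at which the label changes are exactly the endpoints (the edge-label-degree-$1$ vertices) of the maximal same-labeled paths, and that since $C_n$ uses at least two labels it is the disjoint union of at least two such arcs, none of which is the whole cycle. The induced-subgraph hypothesis is then literally the statement that every one of these arcs has even edge-length, so it suffices to prove that $\ab\neq 0$ if and only if every maximal same-labeled path has even length. The core of the argument is to combine the two types of constraint arc-by-arc. Fix a same-labeled path with vertices $w_0,w_1,\dots,w_L$ (endpoints $w_0,w_L$, interior $w_1,\dots,w_{L-1}$) and let $a(w_j)$ denote the coefficient of $w_j$ in $X$. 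The interior relations $a(w_{j-1})=a(w_{j+1})$ force all even-indexed coefficients along the path to a common value and all odd-indexed coefficients to a common value, while the change-of-label constraints at the two endpoints force $a(w_1)=0$ and $a(w_{L-1})=0$. Hence all odd-indexed coefficients vanish, and if $L$ is odd then $w_{L-1}$ is even-indexed, forcing the even-indexed value to vanish as well; thus an odd arc kills every coefficient on it, including those of its two endpoints.

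To finish, I would set $\gamma_s=a(b_s)$ for the coefficients of the boundary vertices $b_1,\dots,b_r$ shared between consecutive arcs and record the induced relations: an even arc forces $\gamma_s=\gamma_{s+1}$ (both its endpoints are even-indexed), while an odd arc forces $\gamma_s=\gamma_{s+1}=0$. If some arc is odd, that single zero value then propagates around the cyclically arranged arcs — equalities across even arcs and forced zeros across odd arcs — so all $\gamma_s=0$, and together with the vanishing of every interior coefficient this gives $X=0$, i.e.\ no abelian factor. Conversely, if every arc is even, then all $\gamma_s$ are equal; setting this common value to $1$ and every odd-indexed (interior) coefficient to $0$ yields a well-defined nonzero element, namely the sum of the even-indexed vertices, so $\ab$ is nontrivial.

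I expect the main obstacle to be the bookkeeping in the last paragraph: verifying that the even/odd parity assignment is globally consistent around the cycle (which holds because each $L_s$, and hence $n=\sum_s L_s$, is even), correctly tracking which endpoint coefficients an odd arc forces to zero, and making the propagation argument airtight rather than hand-wavy, so that a single odd arc provably annihilates every coefficient. The short arcs also need a separate sanity check — the case $L=1$, where there are no interior vertices and both endpoints are directly forced to zero, and $L=2$ — to confirm they fit the general pattern.
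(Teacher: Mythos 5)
Your argument is correct and follows essentially the same route as the paper's: both decompose $C_n$ into maximal same-labeled paths, use the vertex-by-vertex bracket constraints (the paper's Proposition~\ref{prop:EEdr} and Lemma~\ref{lem:everyother}) to show that an odd arc annihilates all of its coefficients and propagates zeros around the cycle, and exhibit the sum of alternate vertices as a nonzero element of $\ab$ when every arc is even. Your explicit bookkeeping of the boundary coefficients $\gamma_s$ is a slightly tidier organization of the paper's propagation step, but not a genuinely different proof.
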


\begin{proof}  Let $p$ denote the number of edge labels. Since there are two or more edge labels, $n\geq p\geq 2$.  Since $C_n$ is a cycle, the induced subgraph on any edge label consists of the finite disjoint union of  paths.

$(\Rightarrow $) Assume that $\n_{C_n}$ has a nontrivial abelian factor and suppose that for some edge label $Z_0$, the induced subgraph on that edge label includes a connected component that is a path on an odd number of edges. Denote this odd length path by $v_1 - v_2 -\cdots - v_k$. Since there are an odd number of edges in this path, the index  $k$ is even. Suppose $X=\sum_{i=1}^k a_i v_i +Y$  is in the abelian factor of $\n_{C_n}$ where the expansion of $Y$ in terms of the vertex labels will have no terms from the odd path made up by the vertices $v_i, 1\leq i\leq k$.  By Proposition \ref{prop:EEdr} Part (\ref{Lemma1part}), $a_2=a_{k-1}=0$, and by Lemma~\ref{lem:everyother},  we have $a_1=a_3=\cdots = a_{k-1}$ and $a_2=a_4=\cdots = a_{k}$. Since $a_2=0$, we have $a_2=a_4 = \cdots = a_k = 0$, and $a_{k-1}=0$ implies $a_1=a_3=\cdots = a_{k-1}=0$. Thus, any element $X$ in the abelian factor has no nonzero terms from vertices in any odd path. 

Then for a same-labeled path in $C_n$ incident to an odd path, we can again use Proposition \ref{prop:EEdr} Part (\ref{Lemma1part}) and Lemma \ref{lem:everyother} in the same manner as above. In addition, we have the fact that the coefficient of the term corresponding to the vertex where the two paths are incident is zero. Thus, the terms of an element in the abelian factor will have coefficient zero for any term associated with vertex labels from that path. Since $C_n$ is decomposed into finitely many paths, each incident to another path at both the initial and final vertex, we get that any element $X$ in the abelian factor is zero.   Hence $\n_{C_n}$ has no abelian factor. 

($\Leftarrow$) Assume the induced subgraph of $C_n$ on each edge label consists of the finite disjoint union of paths each with an even edge-length. This condition implies that $n$ is even. Decompose $C_n$ into these paths, where each subgraph is a same-labeled path and incident paths are of distinct edge labels.  Denote the vertices of $C_n$ by $\{v_1, \ldots, v_n\}$ and denote the edge labels of each same-labeled path by $Z_1, \ldots, Z_p$ where $Z_1$ is the edge label of $(v_1, v_2)$ and the edge labels are indexed in the order they appear in standard orientation. Note that not all $Z_i$'s are necessarily distinct, but because we are decomposing on induced graphs for each edge label, we do assume that $Z_{\ell}\neq Z_{\ell+1}$ for $1\leq \ell \leq p-1$ and $Z_1\neq Z_p$. Then a straightforward computation shows that $X = \sum_{i=1}^{n/2} v_{2i-1}$ is in the abelian factor. To see this, first observe that $[X, v_{2j-1}] = 0$ for each $1\leq j\leq n/2$ since $[v_{2j-1}, v_{2j-1}]=0$ and there are no adjacency relations among distinct odd indexed vertices so $[v_{2i-1}, v_{2j-1}]=0$ for $i\neq j$. Observe that the edge labels of the two edges incident to an even indexed vertex are the same since each same-labeled path is of even length and starts at a vertex with odd index.  Thus, $[v_{2i-1}, v_{2i}]= [ v_{2i}, v_{2i+1}]$. Hence for any $1\leq i\leq n/2$, we obtain $[X, v_{2i}] = [v_{2i-1}, v_{2i}] + [ v_{2i+1}, v_{2i}] = 0$ by skew symmetry of the bracket. Thus, $\n_{C_n}$ has a nontrivial abelian factor.
\end{proof}

\begin{cor}\label{Cor:CnoddNoEdR} Let $n$ be odd and  let $C_n$ be a cycle with standard orientation and at least two distinct edge labels.  Then $\n_{C_n}$ has trivial abelian factor.
\end{cor}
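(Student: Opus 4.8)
The plan is to derive this directly from Proposition~\ref{prop:Cnoddpath}, arguing by contraposition. The corollary shares the standing hypotheses of that proposition (standard orientation and at least two distinct edge labels), so the biconditional there applies verbatim. The only new input is the parity constraint $n$ odd, and the key is to show that a nontrivial abelian factor is incompatible with this parity.

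First I would suppose, toward a contradiction, that $\n_{C_n}$ has a nontrivial abelian factor. Then by the ($\Rightarrow$) direction of Proposition~\ref{prop:Cnoddpath}, the induced subgraph of $C_n$ on each edge label decomposes as a disjoint union of paths, each of even edge-length. Next I would count edges: every edge of $C_n$ carries exactly one label, so it lies in exactly one of these edge-label induced subgraphs, and within that subgraph it lies on exactly one of the constituent paths. Hence the $n$ edges of $C_n$ are partitioned among all these paths, and $n$ equals the sum of their edge-lengths. Since each such length is even, $n$ is a sum of even integers and is therefore even, contradicting the hypothesis that $n$ is odd.

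Concluding, no nontrivial abelian factor can exist, so $\n_{C_n}$ has trivial abelian factor. I expect no real obstacle here: the entire content is the edge-counting observation already flagged inside the proof of Proposition~\ref{prop:Cnoddpath} (where it was noted that the even-length condition forces $n$ even). The one point to state carefully is that the paths across \emph{all} edge labels together account for every edge exactly once, so that the parity of $n$ is genuinely controlled by the sum of the path lengths rather than by any single edge label's contribution.
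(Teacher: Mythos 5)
Your proposal is correct and follows essentially the same route as the paper: the paper's proof likewise invokes Proposition~\ref{prop:Cnoddpath} and notes that the required decomposition into even-length same-labeled paths is impossible when the cycle has an odd number of edges. Your write-up merely makes the edge-counting step explicit, which is a fine (and slightly more careful) rendering of the same argument.
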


\begin{proof} This follows directly from Proposition \ref{prop:Cnoddpath}, as the decomposition into even paths required for a nontrivial abelian factor is impossible in a cycle with an odd number of edges. 
\end{proof}

\section{\textbf{Schreier graphs}}\label{Sec:Schreier}

The next family of graphs that we will consider are Schreier graphs, which are a generalization of Cayley graphs. In general, Schreier graphs are not simple, so multiple edges between vertices as well as loops on a vertex are allowed.

\begin{defn}\label{defn:regular} A \emph{$k$-regular graph} is a graph such that every vertex has degree $k$. We will say a graph is \emph{even-regular} if $k$ is even.
\end{defn}

\begin{defn} A \emph{Schreier graph} $G=(V,E,C)$ is an even-regular directed edge-labeled graph, where for each vertex $v\in V(G)$ and for each label $Z \in C(G)$, there is one edge starting at $v$ with edge label $Z$ and one edge terminating at $v$ with same edge label $Z$. Note that this could be the same edge, forming a loop at vertex $v$. \end{defn}

\begin{figure}[ht]
\begin{center} 
\tikzset{every loop/.style={distance=10mm, looseness=10}}
\begin{tikzpicture}[->,>=stealth',shorten >=1pt,auto,
  thick,vertex/.style={circle,draw,fill,scale=.5},node distance=2in,thick,every edge quotes/.append style = {font=\footnotesize}]

\node[vertex, label=left:{$v_1$}](X1) {};
\node[vertex, label=right:{$v_2$}](X2) [below right of=X1] {};
\node[vertex, label=right:{$v_3$}] (X3) [below of=X2]  {};
\node[vertex, label=left:{$v_5$}] (X5)[below left of=X1]  {};
\node[vertex, label=left:{$v_4$}] (X4)[below of=X5]  {};

\path[every node/.style={font=\sffamily\small}]
(X1) edge ["$Z_2$"] (X2) edge [in=340, out=60, loop,"$Z_1$"] (X1)
(X2)     edge ["$Z_1$"]  (X3)
				 edge node[pos=.5,above] {$Z_2$} (X5)
(X3) edge  [bend left,"$Z_2$"]	 (X4)	
		     edge node[pos=.2,above] {$Z_1$}   (X5)
(X4) edge [bend left]  node[pos=0.5,below] {$Z_2$} (X3)
					edge node[pos=.2,above] {$Z_1$} (X2)
(X5) edge["$Z_2$"] (X1)
				edge node[pos=0.5,left] {{\color{black}$Z_1$}} (X4)
				;
\end{tikzpicture}
\caption{Example of a Schreier graph}
\label{Fig:Schr}
\end{center}
\end{figure}
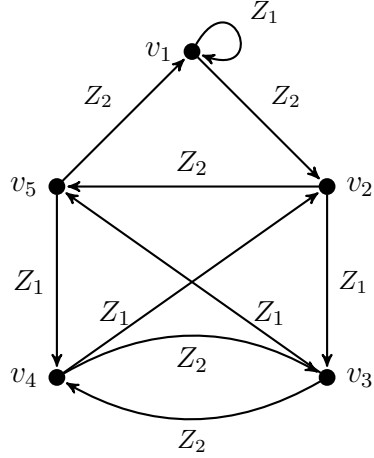

The technical definition of a Schreier graph comes from a group, subgroup, and a set of generators for the group. The vertices are the right cosets and the edges are formed by right inverse multiplication of the cosets by each of the group generators. For details on the group theoretic motivation for this construction, see \cite[Definition 2.1]{R}.  By \cite[Theorem 2]{Gr}, every connected even-regular graph, is a Schreier graph for a certain selection of group, subgroup, and generating set. This paper is focused on the graph theoretic  structure of the Schreier graphs rather than which groups and subgroups create these graphs.

One important property of a Schreier graph $G$ that we are utilizing is a group action on the vertices of the graph. This group action $\alpha$ on the vertices of $G$ is defined such that $$\alpha(Z)v=w \mbox{ if and only if $(v,w) \in E(G)$ with edge-label $Z$.}$$ This group action is then expanded to the entire group by writing each element in the group as a product of the generators in the edge-label set. We have $$\alpha(Z_1Z_2)v=\alpha(Z_1)(\alpha(Z_2)v).$$ From this we can see that $$\alpha(Z^{-1})w=v \mbox{ if and only if } \alpha(Z)v=w.$$

\begin{ex}\label{schrhouse} Given the Schreier graph in Figure~\ref{Fig:Schr}, we have the following:
\begin{eqnarray*} \alpha(Z_1)v_1 &=& v_1 \mbox{  since there is a $Z_1$ loop from $v_1$ to $v_1$,} \\ \alpha(Z_2)v_1 &=& v_2 \mbox{ since there is a $Z_2$ edge from $v_1$ to $v_2$,}\\ \alpha(Z_2^{-1})v_1 &=& v_5 \mbox{ since there is $Z_2$ edge from $v_5$ to $v_1$, and}\\ \alpha(Z_1^2)v_2 &=& v_5 \mbox{ since there are $Z_1$ edges from $v_2$ to $v_3$ and from $v_3$ to $v_5$.}\end{eqnarray*}
\end{ex}

\begin{prop}\label{defn:jz} (\cite{R} Theorem 4.2) Let $G$ be a Schreier graph and $\n_G$ the associated Lie algebra constructed by Definition \ref{defn:construction}. Then the $j$-operator on $\n_G$ is given by $$j(Z)v=\alpha(Z)v-\alpha(Z^{-1})v,$$ for all $Z$ in $[\n_G,\n_G]$ and for all $v\in \V$. \end{prop}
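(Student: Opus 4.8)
The plan is to verify the formula on a basis and then extend by linearity. Since $j$ is linear in its argument and the edge labels $Z_1,\dots,Z_p$ form an orthonormal basis of $[\n_G,\n_G]=\w$, it suffices to establish the identity for a single edge label $Z=Z_s$ (where the group action $\alpha(Z)$ is literally defined) and a single vertex $v$; the general statement then follows by expanding $Z$ in the $Z_s$ and using linearity. Moreover, because the vertices form an orthonormal basis of $\V$, two vectors of $\V$ agree exactly when they have the same inner product against every vertex $w$. So the core of the argument is to compute $\langle \,\cdot\,, w\rangle$ of each side against an arbitrary vertex $w$ and check that they coincide.

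First I would treat the left-hand side. By the defining relation $(\ref{jz})$, we have $\langle j(Z_s)(v),w\rangle=\langle Z_s,[v,w]\rangle$. Expanding $[v,w]$ according to Definition~\ref{defn:construction} and using that $\{Z_1,\dots,Z_p\}$ is orthonormal, this pairing collapses to $\epsilon_s-\epsilon'_s$: it equals $+1$ when $(v,w)$ is an edge with label $Z_s$, equals $-1$ when $(w,v)$ is an edge with label $Z_s$, and is $0$ otherwise (and the two contributions simply cancel should both occur).

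Next I would translate these two edge conditions into the language of the group action. By the definition of $\alpha$, the directed edge $(v,w)$ carries the label $Z_s$ precisely when $w=\alpha(Z_s)v$, and by the recorded equivalence that $\alpha(Z^{-1})w=v$ holds exactly when $\alpha(Z)v=w$, the directed edge $(w,v)$ carries the label $Z_s$ precisely when $w=\alpha(Z_s^{-1})v$. The even-regular Schreier property ensures that for the fixed vertex $v$ there is exactly one outgoing $Z_s$-edge and exactly one incoming $Z_s$-edge, so $\alpha(Z_s)v$ and $\alpha(Z_s^{-1})v$ are well-defined single vertices. Hence $\langle j(Z_s)(v),w\rangle$ equals $1$ when $w=\alpha(Z_s)v$ minus $1$ when $w=\alpha(Z_s^{-1})v$. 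Computing the right-hand side directly, since $\alpha(Z_s)v$ and $\alpha(Z_s^{-1})v$ are vertices and the vertices are orthonormal, $\langle \alpha(Z_s)v-\alpha(Z_s^{-1})v,\,w\rangle$ is exactly this same quantity. As both sides pair identically with every vertex $w$, they are equal, which finishes the basis case and hence the proposition.

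I expect the only delicate point to be the bookkeeping when $\alpha(Z_s)v=\alpha(Z_s^{-1})v$, which includes the case of a $Z_s$-loop at $v$ (where both equal $v$). There the bracket picks up both $\epsilon_s=1$ and $\epsilon'_s=1$, but these cancel, matching the vanishing of $\alpha(Z_s)v-\alpha(Z_s^{-1})v$ on the right, so no separate case is needed. This automatic cancellation is the main reason I would favor the coefficient-matching approach over attempting to write $j(Z_s)$ as an explicit matrix: the non-simple features of a Schreier graph (multiple edges between vertices and loops) are absorbed cleanly by comparing inner products against the orthonormal vertex basis.
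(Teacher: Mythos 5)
Your argument is correct, but note that the paper does not prove this proposition at all: it is imported as Theorem 4.2 of \cite{R}, so there is no internal proof to compare against. What you have written is a complete, self-contained verification: reduce by linearity of $j$ in its argument to a single edge label $Z_s$, then compare both sides of the claimed identity against the orthonormal vertex basis, using the defining relation $\langle j(Z_s)(v),w\rangle=\langle Z_s,[v,w]\rangle=\epsilon_s-\epsilon_s'$ and the translation of the two indicator conditions into $w=\alpha(Z_s)v$ and $w=\alpha(Z_s^{-1})v$. The Schreier regularity hypothesis is invoked exactly where it is needed, namely to guarantee that $\alpha(Z_s)v$ and $\alpha(Z_s^{-1})v$ are single well-defined vertices, and your closing remark about the cancellation when $\alpha(Z_s)v=\alpha(Z_s^{-1})v$ (a $Z_s$-loop at $v$, or a doubled edge between $v$ and $w$ labeled $Z_s$ in both directions) correctly disposes of the non-simple cases that distinguish Schreier graphs from the simple-graph setting. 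One point worth making fully explicit: for a general $Z\in[\n_G,\n_G]$ the symbol $\alpha(Z)$ is not literally defined, since the action $\alpha$ is defined only on group elements (words in the labels) and not on linear combinations, so the statement for arbitrary $Z$ only makes sense via the linear extension you describe; in practice the paper only ever applies the formula with $Z\in C$, so your basis case is exactly what is used downstream (e.g.\ in Lemmas~\ref{xiabelian} and \ref{MainLemma}).
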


Given a Schreier graph $G$ and the Lie algebra $\n_G$ associated to it as constructed in Definition~\ref{defn:construction}, we wish to find a basis for the abelian factor of $\n_G$. We begin by adapting Definition~\ref{defn:scripte} of $\E$ to a sequence of paths of length two with the same label and then define an equivalence relation on the vertices of $G$ based on these sequences.

\begin{defn}\label{defn:equivalence} Let $G=(V,E,C)$ be a Schreier graph. \begin{enumerate}
\item  We define a relation $\sim$ on $V(G)$:
$v_i \sim v_j$  if there exists a sequence of edge labels $\{Z_1, \dots, Z_{\ell}\} \subseteq C$ (edge labels not necessarily distinct) such that \[v_j=\alpha\Big((Z_{\ell}^{\pm 1})^2 \cdots (Z_2^{\pm 1})^2 (Z_1^{\pm 1})^2\Big)v_i.\]
\item We call this product $(Z_{\ell}^{\pm 1})^2 \cdots (Z_2^{\pm 1})^2 (Z_1^{\pm 1})^2$ a {\em 2-path sequence}. We will denote the set of all 2-path sequences by $\mathcal{T}$. In other words $v_i \sim v_j$ if and only if there exists a 2-path sequence $T \in \mathcal{T}$ such that $v_j = \alpha(T) v_i,$ i.e., if there exists a 2-path sequence connecting $v_i$ to $v_j$.
\end{enumerate} \end{defn}

\begin{ex}
In Figure~\ref{Fig:Schr}, $v_1 \sim v_2$ because $v_2 = \alpha(Z_2^2Z_2^2)(v_1)$. We also note that $v_2 = \alpha((Z_2^{-1})^2) v_1$. Hence the 2-path sequence connecting the vertices  is not necessarily unique.
\end{ex}

\begin{lemma} \label{equivalence} Let $G=(V,E,C)$ be a Schreier graph. Then the relation $\sim$ from Definition~\ref{defn:equivalence} is an equivalence relation.
\end{lemma}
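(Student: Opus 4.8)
The plan is to verify the three defining properties of an equivalence relation---reflexivity, symmetry, and transitivity---for the relation $\sim$ on $V(G)$, where $v_i \sim v_j$ means there is a 2-path sequence $T \in \mathcal{T}$ with $v_j = \alpha(T)v_i$. The key observation throughout is that each building block of a 2-path sequence has the form $(Z^{\pm 1})^2$ for some edge label $Z \in C$, and that the set $\mathcal{T}$ of such products is closed under the operations needed to check each property. The group action $\alpha$ satisfies $\alpha(ST)v = \alpha(S)(\alpha(T)v)$ and $\alpha(Z^{-1})w = v$ if and only if $\alpha(Z)v = w$, as recorded in the excerpt, and these are the only algebraic facts I will need.

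First I would establish reflexivity. For any vertex $v_i$, pick any edge label $Z \in C$; since $G$ is a Schreier graph there is always such a label. The element $\alpha(Z^2 (Z^{-1})^2)v_i = \alpha(Z^2)\alpha((Z^{-1})^2)v_i$. Using the inverse relation twice, $\alpha((Z^{-1})^2)$ and $\alpha(Z^2)$ are inverse maps on $V(G)$, so this returns $v_i$. The product $Z^2(Z^{-1})^2$ is a legitimate 2-path sequence (a string of blocks of the form $(Z^{\pm 1})^2$), hence $v_i \sim v_i$. For symmetry, suppose $v_i \sim v_j$ via $T = (Z_\ell^{\pm 1})^2 \cdots (Z_1^{\pm 1})^2$, so $v_j = \alpha(T)v_i$. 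Applying $\alpha$ of the formal inverse word $T^{-1} = (Z_1^{\mp 1})^2 \cdots (Z_\ell^{\mp 1})^2$ to $v_j$ recovers $v_i$; since $T^{-1}$ is again a product of blocks $(Z^{\pm 1})^2$, it lies in $\mathcal{T}$, giving $v_j \sim v_i$. For transitivity, if $v_i \sim v_j$ via $T_1$ and $v_j \sim v_k$ via $T_2$, then $v_k = \alpha(T_2)\alpha(T_1)v_i = \alpha(T_2 T_1)v_i$, and the concatenation $T_2 T_1$ is once more a 2-path sequence, so $v_i \sim v_k$.

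The main conceptual point---and the only place requiring care---is checking that the relevant words genuinely remain 2-path sequences, i.e. that $\mathcal{T}$ is closed under formal inversion and concatenation. Concatenation is immediate since juxtaposing two strings of $(Z^{\pm 1})^2$-blocks yields another such string. Inversion is the subtler step: one must confirm that reversing the order of the blocks and flipping each exponent sign produces a word whose action inverts the original, which follows from $\alpha$ being a group action together with the squared-inverse identity. Because each block is squared, I should be slightly careful that $((Z^{\pm 1})^2)^{-1} = (Z^{\mp 1})^2$ as a formal word of the required type, which it is. I expect this closure verification to be the main (though modest) obstacle; everything else reduces to direct application of the two action identities quoted from Proposition~\ref{defn:jz} and the surrounding discussion.
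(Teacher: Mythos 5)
Your proposal is correct and follows essentially the same route as the paper: reflexivity via a word of the form $(Z^{\pm 1})^2(Z^{\mp 1})^2$ acting as the identity, symmetry via the formal inverse word $(Z_1^{\mp 1})^2\cdots(Z_\ell^{\mp 1})^2$, and transitivity via concatenation of 2-path sequences. The closure of $\mathcal{T}$ under inversion and concatenation that you flag as the key point is exactly the content of the paper's argument.
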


\begin{proof} (Reflexive): Since $G$ is a Schreier graph, for each vertex $v_i \in V$, there is an edge labeled $Z$ starting at $v_i$ and terminating at some vertex $v_j$, for some $Z \in C$. Vertex $v_j$ will also have an edge labeled $Z$ starting at $v_j$ and ending at some vertex $v_k$ (Note that these vertices and edges may not be unique). 
\begin{center} \begin{tikzpicture}[->,>=stealth',shorten >=1pt,auto,
  thick,vertex/.style={circle,draw,fill,scale=.5,font=\sffamily\large\bfseries},node distance=2in,thick]

\node[vertex, label=left:{$v_i$}](vi) {};
  \node[vertex, label=above:{$v_j$}](vj) [right of=vi] {};
  \node[vertex, label=right:{$v_k$}](vk) [right of=vj]  {}; 
  \path[every node/.style={font=\sffamily\small}]
    (vi) edge node  {$Z$}  (vj)
		(vj)     edge  node  {$Z$}  (vk)
				;
\end{tikzpicture} \end{center}

Consider $T=(Z^{-1})^2 Z^2 \in\mathcal{T}$. Then 
$\alpha(T)v_i =v_i$. Thus $v_i \sim v_i$.

(Symmetric): Assume that $v_i \sim v_j$. Therefore, there exists a 2-path sequence $T=(Z_{\ell}^{\pm 1})^2 \cdots (Z_2^{\pm 1})^2 (Z_1^{\pm 1})^2$ such that $v_j=\alpha\Big((Z_{\ell}^{\pm 1})^2 \cdots (Z_2^{\pm 1})^2 (Z_1^{\pm 1})^2\Big)v_i$. It follows that $v_i=\alpha\Big((Z_{1}^{\mp 1})^2 (Z_2^{\mp 1})^2 \cdots (Z_{\ell}^{\mp 1})^2\Big)v_j$, where the inverse of $(Z_i)^{\pm}$ is $Z_i^{\mp}$. Therefore $v_j \sim v_i$.\\

(Transitive): Assume that $v_i \sim v_j$ and $v_j \sim v_k$. Then there exists 2-path sequences $T_1,T_2\in\mathcal{T}$ such that $v_j=\alpha(T_1)v_i \mbox{  and } v_k=\alpha(T_2)v_j.$ Thus, $v_k=\alpha(T_2)\Big(\alpha(T_1)v_i\Big)=\alpha(T_2T_1)v_i$. Since $T_2T_1\in\mathcal{T}$, $v_i \sim v_k$.\\
Therefore $\sim$ is an equivalence relation. \end{proof}

Next, we take  the partition of $V(G)$ into equivalence classes with respect to the equivalence relation $\sim$ from Definition~\ref{defn:equivalence}. We will denote the equivalence class containing $v_i$ by $[v_i]$. 

For the Schreier graph in Figure~\ref{Fig:Schr} above, the equivalence relation would partition the vertices into two equivalence classes: $[v_1]=\{v_1, v_2, v_5\}$ and $[v_3]=\{v_3, v_4\}$. 
The sum of the vertices in each equivalence class will form basis elements for the abelian factor of the associated Lie algebra as detailed below.

\begin{thm}\label{edrbasis} Let $\n_G$ be the Lie algebra associated with the Schreier graph $G=(V,E,C)$ as defined above. Let $\beta$ be the the number of distinct equivalence classes in the partition of $V(G)$ with respect to $\sim$ from Definition~\ref{defn:equivalence}. For ease of notation, we will relabel vertices if necessary in order that the vertices $v_1, v_2, \ldots, v_\beta$ represent the distinct equivalence classes. Define $\ds \xi_i=\sum_{v_{j}\in[v_i]} v_{j}$, for each $i$. Then the set $\{\xi_i\}_{i=1}^{\beta}$ forms a basis for the abelian factor of $\n_G$. \end{thm}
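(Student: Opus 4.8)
The plan is to characterize the abelian factor $\ab$ completely as the set of vectors $X = \sum_{v \in V} c_v\, v$ whose coefficient function $v \mapsto c_v$ is constant on each $\sim$-equivalence class; the claimed basis $\{\xi_i\}$ then drops out immediately. The two tools I would lean on are Lemma~\ref{Lemma:AbFactor}, which identifies $\ab = \bigcap_{Z \in C} \ker(j(Z))$, and Proposition~\ref{defn:jz}, which gives $j(Z)v = \alpha(Z)v - \alpha(Z^{-1})v$. The crucial structural fact throughout is that, in a Schreier graph, each $\alpha(Z)$ is a bijection of $V(G)$ with inverse $\alpha(Z^{-1})$, so that every $\alpha(Z)$ permutes the vertex basis.

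First I would fix $Z \in C$ and compute $j(Z)X$ for an arbitrary $X = \sum_{v} c_v v$. Writing $j(Z)X = \sum_v c_v \alpha(Z)v - \sum_v c_v \alpha(Z^{-1})v$ and re-indexing each sum by the relevant bijection — setting $u = \alpha(Z)v$ in the first sum and $u = \alpha(Z^{-1})v$ in the second — yields $j(Z)X = \sum_u \bigl( c_{\alpha(Z^{-1})u} - c_{\alpha(Z)u}\bigr) u$. Since $V(G)$ is a basis of $\V$, the vanishing $j(Z)X = 0$ is equivalent to $c_{\alpha(Z^{-1})u} = c_{\alpha(Z)u}$ for every $u$, and substituting $u = \alpha(Z)v$ (using $\alpha(Z_1Z_2)v = \alpha(Z_1)(\alpha(Z_2)v)$) turns this into the clean invariance $c_v = c_{\alpha(Z^2)v}$ for every $v \in V(G)$.

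Next I would intersect over all labels: $X \in \ab$ if and only if $c_v = c_{\alpha(Z^2)v}$ for every $v$ and every $Z \in C$. Because $\alpha(Z^2)$ is invertible with inverse $\alpha(Z^{-2}) = \alpha((Z^{-1})^2)$, this family of invariances is closed under composition and inversion, so iterating extends it to $c_v = c_{\alpha(T)v}$ for every 2-path sequence $T \in \mathcal{T}$; by Definition~\ref{defn:equivalence} this says exactly that $c$ is constant on each $\sim$-equivalence class. Conversely, constancy on classes forces $j(Z)X = 0$ for all $Z$ since $v \sim \alpha(Z^2)v$. Hence $\ab$ is precisely the space of class-constant vectors. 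The vectors $\xi_i = \sum_{v_j \in [v_i]} v_j$ are class-constant, have pairwise disjoint supports among the basis $V(G)$ and so are linearly independent, and every class-constant vector is the linear combination of the $\xi_i$ whose $i$-th coefficient is the common value of $c$ on $[v_i]$; therefore $\{\xi_i\}_{i=1}^\beta$ is a basis of $\ab$.

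I expect the main obstacle to be the re-indexing step together with making the biconditional airtight: one must use that each $\alpha(Z)$ is a genuine permutation of $V(G)$ (not merely a partially defined map) for the change of variables to be legitimate and for $j(Z)X = 0$ to be an honest if-and-only-if condition on the coefficients, and one must verify that the generator-wise invariances $c_v = c_{\alpha(Z^2)v}$ generate exactly the relation $\sim$ — neither a coarser nor a finer one — which is where Definition~\ref{defn:equivalence} and the group-action identities are essential. Once the exact equality between $\ab$ and the space of class-constant vectors is established, linear independence and spanning are routine.
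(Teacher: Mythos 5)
Your proposal is correct and follows essentially the same route as the paper: both rest on Lemma~\ref{Lemma:AbFactor} and Proposition~\ref{defn:jz}, translate $j(Z)X=0$ into invariance of the coefficients under $\alpha(Z^2)$ using that each $\alpha(Z)$ permutes the vertex basis, iterate to all 2-path sequences, and conclude that the abelian factor is exactly the span of the class sums $\xi_i$. The only difference is organizational — you package the two inclusions into a single if-and-only-if characterization of class-constant coefficient functions, whereas the paper proves them as separate lemmas — and your re-indexing computation and the closure/invertibility checks you flag are exactly the points the paper's Lemmas~\ref{PermEq} and \ref{MainLemma} handle.
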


The following lemmas will lead us to the proof of this theorem.

\begin{lemma}\label{PermEq}  For each $Z\in C$, $\alpha(Z^2)$ permutes the vertices in each equivalence class $[v]$. Consequently, $\alpha(Z^2) \xi_i = \xi_i$ for all $i = 1, \ldots, \beta$ and $Z \in C$. \end{lemma}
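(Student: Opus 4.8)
The plan is to exploit the fact that $Z^2$ is itself the simplest possible 2-path sequence, combined with the observation that each $\alpha(Z)$ is a bijection of the (finite) vertex set of a Schreier graph.

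First I would record that each $\alpha(Z)$ is a permutation of $V(G)$: by the defining property of a Schreier graph, every vertex has exactly one outgoing and one incoming edge with label $Z$, so $\alpha(Z)$ is well-defined, and the relation $\alpha(Z^{-1})w = v \iff \alpha(Z)v = w$ displayed before Example~\ref{schrhouse} says precisely that $\alpha(Z^{-1})$ is the two-sided inverse of $\alpha(Z)$. Since $\alpha(Z^2) = \alpha(Z)\circ\alpha(Z)$ by the multiplicativity of $\alpha$, it follows that $\alpha(Z^2)$ is also a bijection of $V(G)$.

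The key step is to observe that taking $\ell = 1$ and $Z_1 = Z$ in Definition~\ref{defn:equivalence} shows that $Z^2 = (Z^{+1})^2 \in \mathcal{T}$ is a 2-path sequence. Hence, for any vertex $w$, we have $w \sim \alpha(Z^2)w$, so by Lemma~\ref{equivalence} the vertex $\alpha(Z^2)w$ lies in the same equivalence class $[w]$. This shows that $\alpha(Z^2)$ maps each equivalence class into itself. Combining this with the previous paragraph, the restriction of the bijection $\alpha(Z^2)$ to a finite equivalence class $[v]$ is an injection of $[v]$ into itself, hence a permutation of $[v]$, which establishes the first assertion. For the consequence I would extend $\alpha(Z^2)$ linearly to $\V$ and write $\alpha(Z^2)\xi_i = \sum_{v_j \in [v_i]} \alpha(Z^2)v_j$; since $\alpha(Z^2)$ merely permutes the summands $\{v_j : v_j \in [v_i]\}$, the right-hand side is a reindexing of $\sum_{v_j \in [v_i]} v_j = \xi_i$, giving $\alpha(Z^2)\xi_i = \xi_i$.

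I do not anticipate a serious obstacle here; the only point requiring genuine care is the bijectivity of $\alpha(Z)$, which must be invoked to upgrade the statement ``$\alpha(Z^2)$ maps $[v]$ into $[v]$'' to ``$\alpha(Z^2)$ permutes $[v]$'', since mapping a finite set into itself yields a permutation only once injectivity is known.
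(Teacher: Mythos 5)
Your proof is correct and follows essentially the same route as the paper: both hinge on the observation that $Z^2$ (and $(Z^{-1})^2$) are 2-path sequences, so $\alpha(Z^2)$ maps each equivalence class $[v]$ into itself, after which the linearity computation for $\xi_i$ is identical. The only cosmetic difference is that you upgrade ``maps $[v]$ into $[v]$'' to ``permutes $[v]$'' via injectivity of $\alpha(Z^2)$ on the finite vertex set, whereas the paper exhibits surjectivity directly by noting $\alpha(Z^2)\bigl(\alpha((Z^{-1})^2)w\bigr)=w$ with $\alpha((Z^{-1})^2)w \in [v]$; both close the argument equally well.
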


\begin{proof}
Let $Z \in C$ and $v \in V(G)$. Let $w \in [v]$. Then there exists a 2-path sequence $T$ such that $w = \alpha(T) v$. Therefore $\alpha(Z^2) w = \alpha(Z^2)(\alpha(T) v) = \alpha(Z^2 T)v \in [v]$ as $Z^2 T$ is also a 2-path sequence.  Also, if $w\in[v]$, then $\alpha(Z^{-1})^2 w\in[v]$ since $(Z^{-1})^2 \in \mathcal{T}$. Then we see that $\alpha(Z^2)\Big(\alpha(Z^{-1})^2 w\Big)=w$.

Now,
\begin{eqnarray*} \alpha(Z^2)\xi_i &=& \alpha(Z^2) \sum_{v_j \in [v_i]} v_j\\
&=&  \sum_{v_j \in [v_i]}  \alpha(Z^2) v_j\\
&=& \sum_{v_k \in [v_i]} v_k, \end{eqnarray*}

because $\alpha(Z^2)$ permutes the vertices in $[v_i]$. Thus $\alpha(Z^2)\xi_i = \xi_i$ for each $i = 1, \ldots, \beta $.
\end{proof}

\begin{lemma}\label{xiabelian}For each $i=1, \ldots, \beta$, $\xi_i$ is in the abelian factor of $\n_G$. \end{lemma}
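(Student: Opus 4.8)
The plan is to show that $\xi_i$ lies in the abelian factor of $\n_G$ by appealing to Lemma~\ref{Lemma:AbFactor}, which characterizes the abelian factor as $\bigcap_{Z \in C} \ker(j(Z))$. Thus it suffices to prove that $j(Z)\xi_i = 0$ for every edge label $Z \in C$. The key tool for computing $j(Z)$ on vertices is Proposition~\ref{defn:jz}, which gives the explicit formula $j(Z)v = \alpha(Z)v - \alpha(Z^{-1})v$ for each vertex $v$.

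First I would fix an edge label $Z \in C$ and expand $j(Z)\xi_i$ using linearity and the $j$-operator formula:
\begin{equation*}
j(Z)\xi_i = \sum_{v_j \in [v_i]} j(Z) v_j = \sum_{v_j \in [v_i]} \left( \alpha(Z) v_j - \alpha(Z^{-1}) v_j \right).
\end{equation*}
The goal is to show this sum vanishes. The natural strategy is to argue that the two maps $\alpha(Z)$ and $\alpha(Z^{-1})$, when summed over the equivalence class $[v_i]$, produce the same multiset of output vertices, so that the positive and negative contributions cancel termwise after reindexing. Concretely, I would want to establish that $\sum_{v_j \in [v_i]} \alpha(Z) v_j = \sum_{v_j \in [v_i]} \alpha(Z^{-1}) v_j$ as elements of $\V$.

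The crucial structural fact I would invoke is that each $\alpha(Z)$ (and likewise $\alpha(Z^{-1})$) permutes the equivalence class $[v_i]$. This does not follow immediately from Lemma~\ref{PermEq}, which only guarantees that $\alpha(Z^2)$ preserves $[v_i]$; the single step $\alpha(Z)$ a priori could move a vertex of $[v_i]$ into a different class. I expect this to be the main obstacle. The resolution is that, because $G$ is a Schreier graph, $\alpha(Z)$ is a bijection on all of $V(G)$, and $\alpha(Z) v_j$ and $\alpha(Z^{-1}) v_j$ differ from $v_j$ by a single edge traversal — so while $\alpha(Z)$ need not fix $[v_i]$, the images $\alpha(Z)v_j$ as $v_j$ ranges over $[v_i]$ all lie in one common equivalence class, and crucially $\alpha(Z)v_j$ and $\alpha(Z^{-1})v_j$ lie in the \emph{same} shifted class. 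I would make this precise by observing that $\alpha(Z^{-1})\bigl(\alpha(Z)v_j\bigr) = v_j$ shows $\alpha(Z)v_j \sim \alpha(Z^{-1})v_k$ whenever $v_j$ and $v_k$ are related through $\alpha(Z^2)$, and then use that $\alpha(Z^2) = \alpha(Z)\alpha(Z)$ permutes $[v_i]$ by Lemma~\ref{PermEq} to pair up the terms.

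The cleanest way to finish is a reindexing argument: since $\alpha(Z^2)$ permutes $[v_i]$, as $v_j$ runs over $[v_i]$ so does $w_j := \alpha(Z^2) v_j$, and $\alpha(Z^{-1}) w_j = \alpha(Z^{-1})\alpha(Z^2) v_j = \alpha(Z) v_j$. Hence
\begin{equation*}
\sum_{v_j \in [v_i]} \alpha(Z) v_j = \sum_{v_j \in [v_i]} \alpha(Z^{-1}) \bigl( \alpha(Z^2) v_j \bigr) = \sum_{w_j \in [v_i]} \alpha(Z^{-1}) w_j,
\end{equation*}
where the last equality uses that $v_j \mapsto w_j$ is a bijection of $[v_i]$. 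Substituting back shows $j(Z)\xi_i = 0$. Since $Z \in C$ was arbitrary, $\xi_i \in \bigcap_{Z \in C} \ker(j(Z))$, which equals the abelian factor by Lemma~\ref{Lemma:AbFactor}, completing the proof.
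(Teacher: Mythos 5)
Your proof is correct and is essentially the paper's argument: the paper simply applies $\alpha(Z^{-1})$ to both sides of the identity $\alpha(Z^{2})\xi_i=\xi_i$ from Lemma~\ref{PermEq} to get $\alpha(Z)\xi_i=\alpha(Z^{-1})\xi_i$, which is exactly your reindexing step phrased at the level of the vector $\xi_i$, and then concludes $j(Z)\xi_i=0$ via Proposition~\ref{defn:jz} and Lemma~\ref{Lemma:AbFactor} just as you do.
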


\begin{proof} By Lemma~\ref{PermEq}, we know $\alpha(Z^2)\xi_i=\xi_i$ for each $i$ and for all $Z \in C$. By applying $\alpha(Z^{-1})$ to both sides of this equation, we get $\alpha(Z)\xi_i = \alpha(Z^{-1})\xi_i$, which means $\alpha(Z)\xi_i-\alpha(Z^{-1})\xi_i =0$.  By Proposition \ref{defn:jz}, $j(Z)\xi_i=0$ for each $i$ and for all $Z \in C$. Hence by Lemma \ref{Lemma:AbFactor}, $\xi_i$ is in the abelian factor for all $i=1,\dots,\beta$. \end{proof}

\begin{lemma}\label{MainLemma} Let $X$ be in the  abelian factor of $\n_G$. Then $\alpha(T) X = X$ for all 2-path sequences $T \in \mathcal{T}$. Consequently, $X$ is in the span of $\{\xi_i\}_{i=1}^\beta$, using the notation of Theorem~\ref{edrbasis}. 
\end{lemma}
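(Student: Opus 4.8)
The plan is to deduce the invariance $\alpha(T)X = X$ directly from the defining property of the abelian factor and the formula for $j(Z)$ on a Schreier graph, and then to read off the conclusion by comparing coefficients in the vertex basis.

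First I would note that, by Lemma~\ref{Lemma:AbFactor}, $X$ being in the abelian factor means $j(Z)X = 0$ for every $Z \in C$. Proposition~\ref{defn:jz} rewrites this as $\alpha(Z)X = \alpha(Z^{-1})X$. Applying the linear map $\alpha(Z)$ to both sides and using $\alpha(Z)\alpha(Z^{-1}) = \mathrm{id}$ gives $\alpha(Z^2)X = X$; applying $\alpha(Z^{-1})$ instead gives $\alpha((Z^{-1})^2)X = X$. So $X$ is fixed by $\alpha$ of each elementary factor $(Z^{\pm 1})^2$. Since an arbitrary $T \in \mathcal{T}$ is a product of such factors and $\alpha$ is multiplicative, I would then peel off these factors one at a time (right to left) to conclude $\alpha(T)X = X$ for every 2-path sequence $T$. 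This settles the first assertion.

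For the ``consequently'' clause, I would write $X = \sum_{v_j \in V} c_j v_j$ and show that the coefficient function $v_j \mapsto c_j$ is constant on each equivalence class. Given $v_a \sim v_b$, there is a 2-path sequence $T$ with $v_b = \alpha(T)v_a$; since $\alpha(T)$ is a bijection of $V$, comparing the coefficient of $v_b$ on the two sides of $\alpha(T)X = X$ forces $c_a = c_b$, because $v_a$ is the unique vertex sent to $v_b$ by $\alpha(T)$. Letting $\gamma_i$ denote the common coefficient on the class $[v_i]$, I obtain $X = \sum_{i=1}^\beta \gamma_i \xi_i$, which lies in $\Span\{\xi_i\}_{i=1}^\beta$.

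The first two steps are purely formal manipulations of the group action, so I expect the only delicate point to be the coefficient comparison: it relies on $\alpha(T)$ being an honest permutation of the vertices with a unique preimage, which is exactly what makes the equivalence classes of $\sim$ the right objects to span the abelian factor. Combined with Lemma~\ref{xiabelian}, this pins down the abelian factor as $\Span\{\xi_i\}$ and, once linear independence of the $\xi_i$ (which have disjoint supports) is noted, proves Theorem~\ref{edrbasis}.
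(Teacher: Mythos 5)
Your proposal is correct and follows essentially the same route as the paper's proof: both derive $\alpha((Z^{\pm 1})^2)X = X$ from $j(Z)X=0$ via Proposition~\ref{defn:jz}, extend to all $T\in\mathcal{T}$ by multiplicativity, and then compare coefficients in the vertex basis using that $\alpha(T)$ permutes the vertices. No gaps; your explicit remark that the coefficient comparison hinges on $\alpha(T)$ having a unique preimage of each vertex is exactly the point the paper makes as well.
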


\begin{proof}
Let $X$ be in the abelian factor of $\n_G$. Then by Lemma \ref{Lemma:AbFactor}, it implies that $j(Z)X = 0$ for all $Z \in C$. By Proposition~\ref{defn:jz}, this means that $\alpha(Z)X-\alpha(Z^{-1})X = 0$. Hence $\alpha(Z) X = \alpha(Z^{-1}) X$ and $\alpha((Z^{\pm 1})^2) X = X$ for all $Z \in C$. By induction, we get  $\alpha(T) X = X$ for all 2-path sequences $T \in \mathcal{T}$.

Let the vertex set $V = \{v_1, \ldots, v_n\}$. Assume  that  $X =  \sum_{k=1}^{n} b_k v_k$ is in the abelian factor of $\n_G$. We will prove that $b_i = b_j$ whenever $v_i \sim v_j$.  Suppose that $v_i \sim v_j$ for some $i$ and $j$. By definition of $\sim$, there exists a 2-path sequence $T$ such that $\alpha(T)(v_i) = v_j$. We just proved that $X$ is invariant under $\alpha(T)$ and hence 
\[
   \sum_{k=1}^n b_k \alpha(T) v_k =  \sum_{k=1}^{n} b_k v_k.
\]

Since $\alpha(T)(v_i) = v_j$, we can rewrite the above equation as follows:

\[b_i v_j + \sum_{k \neq i} b_k \alpha(T) v_k =  b_j v_j + \sum_{k \neq j} b_k v_k.\]

Using the fact that $\alpha(T)$ permutes the set of all vertices $\{v_k\}$, there does not exist $k \neq i$ such that $\alpha(T) v_k = v_j$. Also the set $\{v_k\}$
is linearly independent. Hence $b_i = b_j$. Therefore, we have proved that if $v_j$ is in the equivalence class $[v_i]$, then $b_j = b_i$. Thus \[ \ds X = \sum_{i=1}^{\beta} \sum_{v_j \in [v_i]} b_i v_j = \sum_{i=1}^{\beta} b_i \sum_{v_j \in [v_i]} v_j = \sum_{i=1}^{\beta} b_i \xi_i. \]
This proves that $X$ is in the span of $\{\xi_i\}_{i=1}^{\beta}$.
\end{proof}

\noindent {\it Proof of Theorem \ref{edrbasis}.}    Let the vertex set $V = \{v_1, \ldots, v_n\}$. Since $V$ is a linearly independent set and $\{[v_i]\}_{i=1}^{\beta}$ is a partition of $V(G)$, it follows directly that $\{\xi_i\}_{i=1}^{\beta}$ is a linearly independent set. Also the set  $\{\xi_i\}_{i=1}^{\beta}$ is a subset  of the abelian factor by Lemma~\ref{xiabelian}. Moreover, by Lemma~\ref{MainLemma}, the abelian factor is the span of $\{\xi_i\}_{i=1}^{\beta}$ . Therefore $\{\xi_i\}_{i=1}^{\beta}$ forms a basis for the abelian factor of $\n_G$. \qed

\begin{ex} For the Schreier graph $G$ in Figure~\ref{Fig:Schr}, the resulting Lie algebra $\n$ would have a two-dimensional abelian factor with basis $\{v_1+v_2+v_5,v_3+v_4\}$.
\end{ex}

\begin{ex} Consider the cycle $C_n$ with standard orientation and all edge labels the same. Because $C_n$ can be considered a Schreier graph, we can use Theorem~\ref{edrbasis} to compute a basis for the abelian factor of $\n_{C_n}$.
\begin{enumerate}
\item If $n=2k$ is even, there will be two distinct equivalence classes: $[v_1]=\{v_{2i-1}\}_{i=1}^{k}$ and $[v_2] = \{v_{2i}\}_{i=1}^k$.
\item If $n$ is odd, there will be one equivalence class $[v_1]=\{v_i\}_{i=1}^{n}$.
\end{enumerate}
This will result in the same basis for the abelian factor of $\n_{C_n}$ as in Theorem~\ref{Prop:Cnstandard}.
\end{ex}

\section{\textbf{Graph coloring}}\label{Sec:coloring}

One area of interest in graph theory is edge-colored graphs. This is equivalent to edge-labeled graphs by defining the set of edge labels to be the set of colors. We can then use results from Section~\ref{Preliminaries} to study the associated 2-step nilpotent Lie algebras. We assume that all graphs in this section are simple.

\subsection{\textbf{\textit{Proper edge coloring}}}
\begin{defn}\label{defn:proper}
An edge coloring on a graph $G$ is called a {\em proper edge coloring} if no two incident edges have the same color. 
\end{defn}

\begin{prop} Let $G$ be a simple graph with proper edge coloring. Then the associated 2-step nilpotent Lie algebra $\n_G$ has no abelian factor. 
\end{prop}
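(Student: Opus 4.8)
The plan is to reduce the statement to Corollary~\ref{cor:nullscriptE} by showing that the set $\E$ from Definition~\ref{defn:scripte} is empty for any nontrivial properly edge-colored graph. The single observation driving everything is that a proper edge coloring is exactly the condition that every edge-label degree is at most $1$: since no two edges sharing a vertex carry the same color, for each vertex $v$ and each label $Z$ there is at most one edge incident to $v$ with label $Z$, i.e.\ $deg_Z(v)\leq 1$ for all $v$ and all $Z$.

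First I would make this precise. Fix a vertex $v$ and a neighbor $y\in N(v)$, and set $Z=c(vy)$. The edge $vy$ is incident to $y$ and carries label $Z$, so $deg_Z(y)\geq 1$; properness forbids a second edge at $y$ with label $Z$, so in fact $deg_{c(vy)}(y)=1$. Thus every neighbor $y$ of every vertex $v$ satisfies $deg_{c(vy)}(y)=1$, which is never strictly greater than $1$. (Note that edge direction plays no role here, consistent with the fact that $\E$ does not depend on orientation.)

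Next I would feed this into Definition~\ref{defn:scripte}. A vertex $v$ lies in $\E$ precisely when \emph{every} $y\in N(v)$ has $deg_{c(vy)}(y)>1$. Because $G$ is connected and has at least one edge, every vertex has a neighbor, and by the previous step any such neighbor witnesses the failure of the defining condition. Hence no vertex lies in $\E$, so $\E=\emptyset$.

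Finally, Corollary~\ref{cor:nullscriptE} immediately yields that $\n_G$ has no abelian factor. I expect no genuine obstacle: the whole content is the translation ``proper coloring $\Leftrightarrow$ all edge-label degrees $\leq 1$,'' after which Theorem~\ref{Prop:EDRSpan} (via Corollary~\ref{cor:nullscriptE}) does the work. The only point needing a word of care is the degenerate edgeless graph, which is excluded here since connectivity together with the presence of edge labels in the construction guarantees at least one edge.
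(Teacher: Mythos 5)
Your argument is correct and is essentially identical to the paper's proof: both observe that a proper edge coloring forces every edge-label degree to be at most $1$, hence $\E=\emptyset$, and then invoke Corollary~\ref{cor:nullscriptE}. You simply spell out the verification that $\E=\emptyset$ in more detail than the paper does.
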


\begin{proof} Since no edge label is repeated among the edges incident at any single vertex,  $\E = \emptyset$. Thus, by Corollary \ref{cor:nullscriptE}, $\n_G$ has no abelian factor.
\end{proof}

\begin{defn} For a graph $G$, the \emph{chromatic index} of $G$ is denoted $\chi'(G)$ and is the minimum number of colors required for a proper edge coloring of $G$.
\end{defn}

Observe that if $G$ has a proper edge coloring using $k$ colors, then  a proper edge coloring using more colors, up to the number of edges, can be obtained from a proper $k$ coloring by introducing a new color on one edge whose label appears more than one time in $G$.   

Recall the following well-known result from Graph Theory, see \cite{BM} for example. Denote the maximum degree of a vertex in $G$ by $\Delta(G)$.  

\begin{thm}[Vizing's Theorem]  For any finite simple graph $G$, \[ \Delta(G) \leq \chi'(G) \leq \Delta(G) +1.\]
\end{thm}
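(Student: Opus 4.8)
The plan is to treat the two inequalities separately. The lower bound $\Delta(G) \leq \chi'(G)$ is immediate: if $v$ is a vertex of degree $\Delta(G)$, then the $\Delta(G)$ edges incident to $v$ are pairwise incident and so must receive pairwise distinct colors in any proper edge coloring; hence at least $\Delta(G)$ colors are required. The entire content of the theorem lies in the upper bound $\chi'(G) \leq \Delta(G) + 1$, which I would prove by induction on the number of edges $|E(G)|$. The base case (no edges) is trivial, so suppose every simple graph with fewer edges than $G$ admits a proper edge coloring with $\Delta + 1$ colors, where I abbreviate $\Delta = \Delta(G)$.

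For the inductive step, I would delete an arbitrary edge $xy_0$ and, by the inductive hypothesis applied to $G - xy_0$ (whose maximum degree is at most $\Delta$), obtain a proper edge coloring $c$ using the palette $\{1, \ldots, \Delta+1\}$. The goal is to recolor so that $xy_0$ can also be colored. The crucial observation is that since every vertex has degree at most $\Delta$ while we have $\Delta + 1$ available colors, each vertex has at least one \emph{missing} color, meaning a color appearing on none of its incident edges. I would then build a \emph{Vizing fan} at $x$: a maximal sequence of distinct neighbors $y_0, y_1, \ldots, y_k$ of $x$ such that for each $i \geq 1$, the color $c(xy_i)$ is missing at $y_{i-1}$.

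The heart of the argument, and the step I expect to be the main obstacle, is the recoloring that frees a color for $xy_0$. Fix a color $\alpha$ missing at $x$ and a color $\beta$ missing at the terminal fan vertex $y_k$. If $\alpha$ is also missing at $y_k$, a direct \emph{fan rotation}, shifting each $c(xy_{i+1})$ down onto $xy_i$ and assigning $\alpha$ at the top, completes the coloring. Otherwise one must invoke a \emph{Kempe chain} argument: consider the maximal path whose edges alternate between colors $\alpha$ and $\beta$ starting at $x$, swap the two colors along this chain, and then perform an appropriate fan rotation. The subtlety is a careful case analysis on where this alternating path terminates relative to the fan vertices, ensuring the swap does not reintroduce a conflict; maximality of the fan is exactly what guarantees that the relevant missing colors line up. Once the recoloring succeeds in assigning a valid color to $xy_0$, we obtain a proper $(\Delta+1)$-edge-coloring of all of $G$, completing the induction and establishing $\chi'(G) \leq \Delta + 1$.
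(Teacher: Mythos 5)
First, a point of comparison: the paper does not prove this statement at all --- it is quoted as a classical result with a citation to Bondy--Murty, so there is no in-paper argument to measure you against. Your lower bound $\Delta(G) \leq \chi'(G)$ is complete and correct: the $\Delta(G)$ edges at a vertex of maximum degree are pairwise adjacent, so they need pairwise distinct colors. Your outline of the upper bound is the standard textbook proof (induction on edges, missing colors, Vizing fans, Kempe chains), i.e., exactly the argument one finds in the cited reference.

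That said, as written your upper bound is a proof plan rather than a proof, and the gap sits precisely where the theorem's entire difficulty lives. You defer the recoloring step with phrases like ``an appropriate fan rotation'' and ``a careful case analysis on where this alternating path terminates,'' and you assert without verification that ``maximality of the fan is exactly what guarantees that the relevant missing colors line up.'' Concretely, what is missing is: (i) the observation that, by maximality of the fan, the color $\beta$ missing at $y_k$ must already appear as $c(xy_j)$ for some $j \leq k$ (otherwise the fan extends); (ii) the correct choice of endpoint for the $\alpha/\beta$ Kempe chain --- the standard argument examines the maximal alternating path starting at $y_k$ (or at $y_{j-1}$), not ``starting at $x$'' as you wrote, and the cases split according to whether that path terminates at $x$, at $y_{j-1}$, or elsewhere; and (iii) the verification, in each case, that after the swap the truncated or full fan can still be rotated without creating a conflict at $x$ or at any $y_i$. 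Point (iii) is where naive versions of this argument actually break (the swap can destroy the property that $c(xy_{i+1})$ is missing at $y_i$ for fan vertices lying on the chain), so it cannot be waved through. Until that case analysis is carried out, the inductive step is not established.
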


If $G$ is bipartite, then $\chi'(G) = \Delta(G)$. If $G$ is a regular graph (recall Definition~\ref{defn:regular}) on an odd number of vertices, then $\chi'(G) = \Delta(G)+1$ \cite{BM}. Generally speaking,  Paul Erd\H{o}s proved that almost all graphs have $\chi'(G) = \Delta(G)$ \cite{Er}. 

\begin{prop}
Let $C_n$ be a cycle on $n$ vertices where $n$ is even. Then there exists an edge labeling so that $\n_{C_n}$ has no abelian factor and $dim([\n_{C_n},\n_{C_n}])=k$ for each $2\leq k \leq n$.
\end{prop}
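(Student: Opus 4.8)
The plan is to realize each target value of $k$ by a \emph{proper} edge coloring of $C_n$ using exactly $k$ colors, and then invoke the proposition above which states that a simple graph with a proper edge coloring yields a Lie algebra with no abelian factor. The key observation tying the two conclusions together is that the derived algebra of $\n_{C_n}$ equals $\w=\Span\{C\}$, and since $V\cup C$ is orthonormal, $\dim([\n_{C_n},\n_{C_n}])=|C|$ is exactly the number of distinct labels used. Thus a proper coloring with exactly $k$ colors simultaneously forces no abelian factor and $\dim([\n_{C_n},\n_{C_n}])=k$, and the whole problem reduces to exhibiting such colorings.

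First I would handle the base case $k=2$. Because $n$ is even, $C_n$ is bipartite and $2$-regular, so $\Delta(C_n)=2$ and hence $\chi'(C_n)=2$. The alternating coloring $Z_1,Z_2,Z_1,Z_2,\dots$ around the cycle is proper precisely because $n$ is even (so that the last edge, colored $Z_2$, meets the first edge, colored $Z_1$, at a vertex without conflict). This gives a proper $2$-coloring, and the proper-coloring proposition yields $\n_{C_n}$ with no abelian factor and $\dim([\n_{C_n},\n_{C_n}])=2$.

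To reach the remaining values $2<k\le n$, I would iterate the observation recorded just before Vizing's Theorem. Given a proper coloring of $C_n$ with $k$ colors and $k<n$, the pigeonhole principle (there are exactly $n$ edges) guarantees some color is repeated; recoloring one such edge with a brand-new color preserves properness, since a color used on a single edge cannot coincide with the color of any incident edge, and it increases the number of distinct colors by exactly one. Starting from the base case and applying this one-edge recoloring $k-2$ times produces, for every $2\le k\le n$, a proper coloring of $C_n$ with exactly $k$ colors; the extreme value $k=n$ is the coloring assigning every edge its own color, which is trivially proper. Applying the proper-coloring proposition at each stage completes the proof.

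There is no genuine analytic obstacle here: once the proper-coloring proposition is in hand, the entire argument is a bookkeeping of the color count. The two points deserving care are that the number of distinct colors can be made \emph{exactly} $k$ for each $k$ in the range (guaranteed by incrementing one color at a time) and that every intermediate coloring remains proper (guaranteed because each freshly introduced color is used on only one edge). The evenness of $n$ enters solely to secure the base case $\chi'(C_n)=2$, which is what allows the admissible range of $k$ to begin at $2$; for odd $n$ the chromatic index would be $3$ and the range would start at $3$ instead.
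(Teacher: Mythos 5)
Your proposal is correct and follows essentially the same route as the paper: the paper's proof likewise notes that $C_n$ is bipartite so $\chi'(C_n)=\Delta(C_n)=2$, and then (implicitly invoking the observation stated just before Vizing's Theorem) concludes that proper colorings with exactly $k$ colors exist for every $2\leq k\leq n$, each yielding no abelian factor and $\dim([\n_{C_n},\n_{C_n}])=k$. Your write-up simply makes explicit the one-edge recoloring induction and the identification of $\dim([\n_{C_n},\n_{C_n}])$ with the number of distinct labels, both of which the paper leaves implicit.
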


\begin{proof} For $n$ even, the cycle $C_n$ is bipartite, so, $\chi'({C_n}) = \Delta({C_n}) =2$. Thus a proper edge coloring exists with $k$ colors where $2\leq k \leq n$.
\end{proof}

For every simple connected directed graph $G$, there exists an edge coloring of $G$ that is proper. Thus, for every graph with $\chi'(G) < |E(G)|$, there is an edge coloring with multiple edges having the same label that the resulting 2-step nilpotent Lie algebra $\n_G$ has no abelian factor. In fact, for each $k$ with $\chi'(G) \leq k \leq |E(G)|$, we can find an edge coloring of $G$ so that the resulting Lie algebra $\n_G$ has no abelian factor and has $dim ([\n_G,\n_G]) =k$. 

These results are consistent with the results of sections \ref{Sec:stars} and \ref{Sec:cycles}. We illustrate with the following examples:

\begin{ex}
\begin{enumerate}
\item
Let $K_{1,n}$ be a star graph. Then $\chi'(K_{1,n})=|E(K_{1,n})| = n$, so there is only one proper coloring of $K_{1,n}$. If we color the edges so that the coloring is not proper, then by Theorem \ref{Thm:DimEdRStar} the corresponding Lie algebra will have a nontrivial abelian factor. 

\item Let $C_{2n}$ be a cycle on an even number of vertices. We have $\chi'({C_n}) = 2$, and any proper coloring by $k$ colors for $2\leq k\leq 2n$ will result in $\n_{C_n}$ with no abelian factor. However, there are non-proper edge colorings which also result in $\n_{C_n}$ with trivial abelian factor. See Proposition \ref{prop:Cnoddpath} for a precise description. 
\end{enumerate}
\end{ex}

\subsection{\textbf{\textit{An application to uniformly colored graphs}}}
We now consider a family of properly edge-colored graphs known as uniformly colored graphs. 
In \cite{PS}, the authors give a correspondence between uniform Lie algebras and the 2-step nilpotent Lie algebras associated with uniformly colored graphs. 

\begin{defn}\label{defn:uniform}Let $G$ be a a regular graph where every vertex has degree $s>0$. A proper edge coloring of the graph $G$ is called a \emph{uniform coloring} using $p$ colors if each label is used $r$ times.  
 If $G$ has $q$ vertices and if each label is used $r$ times, then $G$ is called a $(p,q,r)$ uniform graph. 
\end{defn}

Since a uniform coloring is a proper coloring, we have that a graph with a uniform coloring also results in the associated Lie algebra $\n_G$ having no abelian factor. Hence the derived algebra of $\n_G$ will be equal to the center of $\n_G$. 

We now give an application to the singularity properties of Lie algebras arising from a graph $G$ with a uniform edge coloring. Refer back to Definition~\ref{defn:singular} for the definition of singularity. 

\begin{prop}\label{Prop:uniformPSsingANS} Let $G$ be a simple $(p,q,r)$ uniformly colored graph with an even number of vertices such that the degree of each vertex, $s$, coincides with the number of edge colors, $p$. Then the associated 2-step nilpotent Lie algebra $\n_G$ is either nonsingular or almost nonsingular. 
\end{prop}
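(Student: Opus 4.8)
The plan is to prove the dichotomy by showing that $\n_G$ is \emph{not} singular; since the three alternatives in Definition~\ref{defn:singular} are mutually exclusive and exhaustive (almost nonsingular being defined as the remaining case), ruling out singularity forces $\n_G$ to be nonsingular or almost nonsingular. Concretely, I must produce a single nonzero $Z \in [\n_G,\n_G]$ for which $j(Z)$ restricted to $\mathcal{Z}(\n_G)^\perp$ is nonsingular. First I would note that a uniform coloring is in particular a proper edge coloring, so no color label repeats among the edges incident to a common vertex; hence $\E = \emptyset$ and by Corollary~\ref{cor:nullscriptE} the abelian factor of $\n_G$ is trivial. Consequently $\mathcal{Z}(\n_G)^\perp = \V$, and it suffices to find one $Z$ with $j(Z)$ invertible on all of $\V$.

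Next I would extract the combinatorial structure forced by the hypotheses. Since the degree $s$ of each vertex equals the number of colors $p$ and the coloring is proper, the $s$ edges meeting any vertex carry $s = p$ distinct colors; as there are only $p$ colors in total, each color appears exactly once at every vertex. Thus each color class $Z_i$ is a $1$-regular spanning subgraph, i.e.\ a perfect matching of the $q$ vertices (consistent with $q$ even). Fixing such a color $Z_i$, I would compute $j(Z_i)$ directly from (\ref{jz}): for a vertex $v$ with unique $Z_i$-neighbor $w$, the simplicity of $G$ gives $[v,w] = \pm Z_i$ with the sign determined by the orientation, while $\langle Z_i,[v,u]\rangle = 0$ for every other vertex $u$, so $j(Z_i)v = \pm w$. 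Hence $j(Z_i)$ is block diagonal, contributing one block $\bigl(\begin{smallmatrix} 0 & -\delta \\ \delta & 0 \end{smallmatrix}\bigr)$ of determinant $\delta^2 = 1$ for each matched pair $\{v,w\}$; as a product of determinant-$1$ blocks, $\det j(Z_i) = 1$, so $j(Z_i)$ is nonsingular.

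Having exhibited a nonsingular $j(Z_i)$ on $\V = \mathcal{Z}(\n_G)^\perp$, I would conclude that $\n_G$ is not singular, and therefore nonsingular or almost nonsingular, as claimed. The step carrying the real content is the structural observation that $s = p$ together with properness pins each color class down to a perfect matching; once that is in hand, the $j$-operator computation is routine and the conclusion is immediate. I do not expect to be able to strengthen the statement to ``nonsingular'' in general, since a mixed element $Z = \sum_i a_i Z_i$ yields a signed superposition of matching operators whose determinant can vanish — which is precisely why the conclusion must be stated as a disjunction.
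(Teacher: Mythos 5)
Your proposal is correct and follows essentially the same route as the paper: both arguments observe that $s=p$ together with properness forces each color class to be a perfect matching, so each $j(Z_i)$ is block diagonal with $2\times 2$ skew blocks and hence nonsingular, which rules out singularity. Your explicit remarks that the abelian factor vanishes (so $\mathcal{Z}(\n_G)^\perp=\V$) and that a mixed $Z=\sum_i a_iZ_i$ may still be singular are accurate and consistent with the paper's surrounding discussion.
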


\begin{proof}
Let $G$ be uniformly colored by edge labels $\{Z_1, \ldots, Z_s\}$. Then each edge label appears $r$ times in $G$ where $r=2q$. Consider the subgraph of $G$ which is induced on the edges labeled $Z_i$, and denote this subgraph by $G_i$.   Observe   since $G$ is uniformly colored and since each vertex has precisely one incident edge with each edge label,  the graph $G_i$ forms a vertex covering of $G$ by disjoint copies of $K_2$. For each $i$, $1 \leq i \leq s$, there exists a permutation $\sigma_i$ of the set $\{1, \ldots, q=2r\}$ so that in the ordered basis $\{X_{\sigma_i(k)}\}$ for $\V \subset \n_G$, the map $j(Z_i)$ has matrix of size $2r \times 2r$ in block diagonal form which has $r$ copies of the $2\times 2$ block $\bigl( \begin{smallmatrix} 
  0 & -1\\
  1 & 0 
\end{smallmatrix} \bigr)$ on the diagonal. It is now apparent that $j(Z_i)$ has eigenvalues $\pm i$. Thus, $j(Z_i)$ is nonsingular for all $1 \leq i \leq s$. Hence, the Lie algebra $\n_G$ is either nonsingular or almost nonsingular. 
\end{proof}

\begin{ex}
Let $G$ be the $(3,6,3)$ uniformly colored graph in Figure~\ref{Fig:uniform}. 

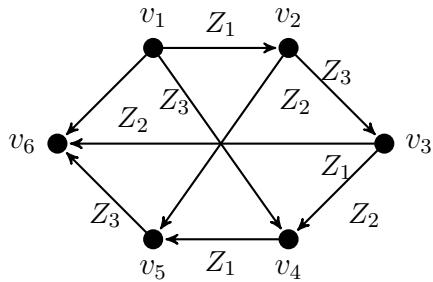
\begin{figure}[ht]
\begin{center}
\begin{tikzpicture}[->,>=stealth',shorten >=1pt,auto,node distance=3cm,
  thick,main node/.style={circle,fill,scale=.6,draw,font=\sffamily\Large\bfseries}]

  \node[main node] (v1) [label=above:{$v_1$}] {};
  \node[main node] (v2) [right of=v1] [label=above:{$v_2$}] {};
  \node[main node] (v3) [below right of=v2] [label=right:{$v_3$}] {};
	\node[main node] (v4) [below left of=v3] [label=below:{$v_4$}] {};
	\node[main node] (v5) [left of=v4] [label=below:{$v_5$}] {};
	\node[main node] (v6) [below left of =v1] [label=left:{$v_6$}] {};
	
  \path[every node/.style={font=\sffamily\small}]
    (v1) edge  node [above] {$Z_1$}  (v2)
		(v1) edge node [below, very near start] {$Z_3$} (v4)
		(v1) edge  node {$Z_2$} (v6)
		(v2) edge node [above] {$Z_3$}  (v3)
		(v2) edge  node [very near start] {$Z_2$} (v5)
		(v3)   edge  node   {$Z_2$}  (v4)
		(v3) edge node[very near start]  {$Z_1$}  (v6)
		(v4) edge  node{$Z_1$} (v5)
			(v5) edge  node [below] {$Z_3$}  (v6);
      \end{tikzpicture}
\end{center}
\caption{Example of a $(3,6,3)$ uniformly colored graph}
\label{Fig:uniform}
\end{figure}

Then a direct computation shows that for $Z_0 = aZ_1 + bZ_2 +cZ_3$, the map $j(Z_0)$ has the following matrix in the basis for $\V \subset \n_G$ given by the vertices of $G$. 
 \[ \left( \begin{matrix}
0& -a & 0 & -c & 0 & -b\\
a & 0 & -c & 0 & -b & 0 \\
0 & c & 0 & -b & 0 & -a\\
c & 0 & b & 0 &-a & 0 \\
0 & b & 0 & a & 0 & -c\\
b & 0 & a & 0 & c & 0 
\end{matrix}
\right) 
\]
 
The map $j(Z_0)$ is nonsingular. 
\end{ex}

\begin{ex} Let $C_8$ be a cycle on 8 vertices with standard orientation and edge labels $Z_1, Z_2, Z_3, Z_4$ repeated in that order around the cycle twice. This graph is a  $(4, 8, 2)$ uniformly colored graph. For each $1 \leq i \leq 4$ we have $j(Z_i)$ singular. This is evident from the graph because each edge is not incident on 4 of the vertices, giving each $j(Z_i)$ a nontrivial nullspace.  However,  a direct computation shows $j(Z_1+Z_3)$ is nonsingular. Hence the resulting Lie algebra $\n_{C_n}$ is almost nonsingular. 
\end{ex}

\section{Conclusion}

This article provides several results in the study of how  edge-labeling patterns in the graph can affect the algebraic structure of the resulting Lie algebra. These results are primarily on the existence and description of an abelian factor and singularity of nilpotent Lie algebras. While the results stand on their own, there are several open questions that the authors suggest for further research.  \\

\noindent{\bf{Open Question 1. }} Is there a necessary and sufficient condition on a labeled graph $G$ to determine the existence of a nontrivial abelian factor in $\n_G$? Currently no such condition has been found, even in the case where $G$ is a simple graph, leaving much left to investigate. We have shown that changing the label or direction of a single edge can alter the structure of the Lie algebra and its abelian factor. From the results presented here, it is evident that the same-labeled paths of length two and their incidence in the graph $G$ will play a significant  role in determining the algebraic properties of $\n_G$.\\

\noindent {\bf{Open Question 2}}. For a given Lie algebra $\n$, can either the graph labeling or construction method of the Lie algebra be adapted in such a way that would result in an isomorphic Lie algebra $\n'$ that could be used more effectively to study these Lie algebras further? The results in this article suggest that the study of Lie algebras associated to graphs using different or adapted constructions may prove fruitful. For star graphs, we describe a weighted graph with fewer edges that still encoded the same information about the Lie algebra (Example~\ref{ex:StarG1}). For any Schreier graph $G$, it is possible to construct a new graph whose vertices are the equivalence classes from Definition~\ref{defn:equivalence}. This construction is useful for determining the abelian factor of $\n_G$, but does not immediately generalize to non-Schreier graphs. \\

\noindent {\bf {Open Question 3.}} Given that a labeled graph gives rise to a natural inner product on the associated Lie algebra $\n_G$, 
to what extent are the geometric properties of these constructed Lie algebras determined by the graph theoretic structure? We discussed singularity in Sections~\ref{Sec:stars} and \ref{Sec:coloring}. A particular topic of interest to the authors is which Lie algebras admit metrics that are Heisenberg-like. In \cite[Corollary 4.3]{DDM}, the first three authors found that the Lie algebras constructed from simple graphs would be Heisenberg-like if and only if the graph is either a star graph or a complete graph on three vertices. It is unknown if a similar result can be obtained when the graphs are allowed to have repeated edge labels. In the case of repeated edge labels, it may be desirable to take the Lie algebra quotient with the abelian factor. 
One obstacle to further geometric computation, including geodesic properties, is finding a suitable basis so the computation of the eigenvectors of the $j(Z)$ map is accessible.


\end{document}